\DeclareMathOperator{\GL}{GL}
\DeclareMathOperator{\gal}{Gal}
\DeclareMathOperator{\en}{End}
\DeclareMathOperator{\res}{Res}
\DeclareMathOperator{\id}{id}
\DeclareMathOperator{\ext}{Ext}
\DeclareMathOperator{\Hom}{Hom}
\DeclareMathOperator{\im}{Im}
\DeclareMathOperator{\trg}{trg}
\DeclareMathOperator{\infl}{Inf}
\DeclareMathOperator{\Br}{Br}
\DeclareMathOperator{\inv}{inv}
\DeclareMathAlphabet{\mathpzc}{OT1}{pzc}{m}{it}
\newcommand{\isom}{\stackrel{\sim}{\longrightarrow}}
\theoremstyle{definition}
\newtheorem{definition}{Definition}[section]
\newtheorem{example}[definition]{Example}
\newtheorem{remark}[definition]{Remark}
\theoremstyle{plain}
\newtheorem{theorem}[definition]{Theorem}
\newtheorem{corollary}[definition]{Corollary}
\newtheorem{lemma}[definition]{Lemma}
\newtheorem{proposition}[definition]{Proposition}
\newcommand{\Z}{\mathbb{Z}}
\newcommand{\Q}{\mathbb{Q}}
\newcommand{\C}{\mathbb{C}}
\renewcommand{\H}{\mathrm{H}}
\newcommand{\rZ}{\mathrm{Z}}
\newcommand{\OQ}{\overline{\Q}}
\newcommand{\GQ}{G_{\Q}}
\newcommand{\nr}{\mathrm{nr}}
\newcommand{\s}{{}^\sigma\!}
\newcommand{\sym}{\mathrm{s}}
\newcommand{\n}{{}^\nu\!}
\renewcommand{\O}{\mathcal{O}}
\newcommand{\QA}{\mathbb{Q}^{\mathrm{ab}}}
\newcommand{\GQA}{G_{\Q}^{\mathrm{ab}}}
\newcommand{\vt}{{}^\vartheta\!}
\author[P.\thinspace J. Bruin]{Peter Bruin}
\address{Mathematisch Instituut\\
Universiteit Leiden\\
Postbus 9512\\
2300 RA \ Leiden\\
Netherlands\\
}
\email{P.J.Bruin@math.leidenuniv.nl}
\thanks{The first author was partially supported by the Netherlands Organisation for Scientific Research (NWO) through Veni grant 639.031.346}
\author[A. Ferraguti]{Andrea Ferraguti}
\address{University of Cambridge\\
DPMMS\\
Centre for Mathematical Sciences\\
Wilbeforce Road, Cambridge CB3 0WB, UK\\
}
\email{af612@dpmms.cam.ac.uk}
\thanks{The second author was supported by Swiss National Science Foundation grant number 168459.}
\date{}
\title{Strongly modular models of $\Q$-curves}
\keywords{$\Q$-curves; quadratic twists; strong modularity; Galois cohomology}
\begin{document}

\begin{abstract}
Let $E$ be a $\Q$-curve without complex multiplication. We address the problem of deciding whether $E$ is geometrically isomorphic to a strongly modular $\Q$-curve. We show that the question has a positive answer if and only if $E$ has a model that is completely defined over an abelian number field. Next, if $E$ is completely defined over a quadratic or biquadratic number field $L$, we classify all strongly modular twists of $E$ over~$L$ in terms of the arithmetic of~$L$. Moreover, we show how to determine which of these twists come, up to isogeny, from a subfield of~$L$.
\end{abstract}

\maketitle

\section{Introduction}

In the study of elliptic curves over number fields, the concept of modularity plays a central role. If $E$ is an elliptic curve over $\Q$ of conductor $N$, the modularity theorem \cite{bcdt,wil,tawil} states that there exists a non-trivial map of algebraic curves $X_0(N)\to E$, called a \emph{modular parametrization}, where $X_0(N)$ is the compact modular curve for $\Gamma_0(N)$. This fact has several important consequences, one of which is the fact that the $L$-function of $E$ coincides with the $L$-function of a weight $2$ newform of level $\Gamma_0(N)$. This in turn implies for example that the $L$-function has an analytic continuation to $\C$.

It is natural to ask for a generalization of this fact to elliptic curves over $\OQ$. Shimura proved \cite{shi2} that elliptic curves over $\OQ$ with complex multiplication (CM) admit a modular parametrization from the compact modular curve $X_1(N)$, for an appropriate positive integer~$N$. For curves without CM, the situation was more complicated. In 1992, Ribet \cite{rib1} proved that under Serre's modularity conjecture, an elliptic curve $E/\OQ$ without CM admits a modular parametrization from $X_1(N)$ if and only if $E$ is a $\Q$-curve, i.e.\ it is $\OQ$-isogenous to all of its Galois conjugates. The subsequent proof, in 2009, of the aforementioned conjecture by Khare and Wintenberger \cite{kwin} completed the characterization of this class of elliptic curves. Note that since there exists a natural map $X_1(N)\to X_0(N)$, the modularity theorem already shows that all elliptic curves over $\Q$ belong to this class. From now on, all $\Q$-curves will be implicitly assumed to be without CM.

Contrary to what happens for elliptic curves over $\Q$, the $L$-function $L(E/K,s)$ of a $\Q$-curve $E$ over a number field~$K\ne\Q$ is never the $L$-function of a newform. However, $L(E/K,s)$ may be a \emph{product} of $L$-functions of newforms. In fact, the proof of Ribet's theorem \cite{rib1} implies that abelian varieties of $\GL_2$-type are isogenous to products of abelian varieties attached to newforms of weight 2. For example, if $E$ is a $\Q$-curve over a quadratic field $K$ that is $K$-isogenous to its Galois conjugate, then its restriction of scalars to~$\Q$ is an abelian surface of $\GL_2$-type. Since $E$ and its restriction of scalars have the same $L$-function \cite{mil1}, it follows that $L(E/K,s)$ is a product of two $L$-functions of newforms of weight 2. This motivates the following definition \cite{guique}: a $\Q$-curve is said to be \emph{strongly modular} if its $L$-function is a product of $L$-functions of newforms of weight~2.

Guitart and Quer \cite{guique2,guique} gave necessary and sufficient conditions for a $\Q$-curve (and, more in general, for a building block) over a number field $K$ to be strongly modular: this happens if and only if $K/\Q$ is an abelian extension, $E$ is \emph{completely defined} over $K$ (i.e.\ all isogenies between conjugates of $E$ are defined over $K$) and the 2-cocycle attached to $E$ (cf.\ section \ref{strong_modularity}) is symmetric. It is easy to deduce from the results of \cite{gola}, \cite{gogui} and \cite{mil1} that every $\Q$-curve is geometrically isogenous to a strongly modular one. In this paper, we address the following question: what are necessary and sufficient conditions for a $\Q$-curve to admit a strongly modular model over some number field? Our main result is the following.
\begin{theorem}\label{main}
 A $\Q$-curve admits a strongly modular model if and only if it has a model completely defined over an abelian number field $K$.
\end{theorem}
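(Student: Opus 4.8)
The forward implication is immediate from the Guitart--Quer criterion recalled above: if a model of $E$ is strongly modular over a field $K$, then $K/\Q$ is abelian and that model is completely defined over $K$, so it is in particular a model completely defined over an abelian field.

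For the converse, suppose $E$ has a model $E_0$ completely defined over an abelian field $K$, and write $G=\gal(K/\Q)$. Choosing isogenies $\phi_\sigma\colon\s E_0\to E_0$ gives the attached $2$-cocycle $c(\sigma,\tau)=\phi_\sigma\,\s\phi_\tau\,\phi_{\sigma\tau}^{-1}\in\Q^*$; dividing by square roots of the degrees $\deg\phi_\sigma$ normalises $c$ to a $\{\pm1\}$-valued cocycle $\epsilon\in\H^2(G,\{\pm1\})$ whose class is well defined. Because $G$ is abelian the degree contribution is automatically symmetric, so by the Guitart--Quer criterion it suffices to produce a model of $E$, completely defined over some abelian field, whose sign cocycle is symmetric. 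The precise obstruction is the antisymmetrisation $\beta(\sigma,\tau)=\epsilon(\sigma,\tau)\epsilon(\tau,\sigma)^{-1}$, an alternating pairing $G\times G\to\{\pm1\}$ depending only on $[\epsilon]$; the class $[\epsilon]$ is represented by a symmetric cocycle exactly when $\beta$ is trivial.

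The first thing I would check is that twisting $E_0$ by an element of $\Q^*$ changes neither $K$ nor $\epsilon$ (the sign ambiguities of the twisting isomorphism cancel in the cocycle), so such twists leave $\beta$ untouched; hence the only way to kill a nontrivial $\beta$ is to pass to a strictly larger abelian extension. I would therefore take $d\in K^*$ with $dK^{*2}\in(K^*/K^{*2})^G$ chosen so that $K'=K(\sqrt d)$ is abelian over $\Q$, and compute the sign cocycle of the quadratic twist $E_0^{(d)}$, which is completely defined over $K'$. The expected shape of this computation is that the new class equals the inflation of $[\epsilon]$ times an explicit class $\delta_d\in\H^2(\gal(K'/\Q),\{\pm1\})$ coming from the Kummer data of $d$, so that the antisymmetric part of the twisted cocycle is $\infl\beta\cdot\gamma_d$ with $\gamma_d$ a pairing determined by $d$. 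The goal is then to choose $d$ so that $\gamma_d$ cancels $\infl\beta$.

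The heart of the matter --- and the step I expect to be hardest --- is the realisability statement: that as $d$ ranges over those classes in $(K^*/K^{*2})^G$ giving abelian $K'/\Q$, the pairings $\gamma_d$ exhaust enough of the space of alternating forms to cancel the given $\beta$. I would attack this by translating it into Kummer theory over $\Q$, identifying the admissible $d$ with a subgroup of $\H^1(\GQ,\{\pm1\})$ and $\gamma_d$ with a cup product, and using global class field theory to verify surjectivity onto the required forms. Keeping $K'$ abelian (not merely Galois) throughout is the delicate constraint, and it is precisely here that the hypothesis that $E$ already has a model completely defined over an abelian field is used; as a sanity check, and to pin down the target class, one can invoke the fact recalled above that the isogeny class of $E_0$ already contains a strongly modular curve, so that a symmetric class is available in the ambient cohomology.
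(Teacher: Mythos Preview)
Your forward direction and overall plan---reduce to the sign component, then twist to make it symmetric---match the paper. The gap is in the twisting mechanism.

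You insist that $K'=K(\sqrt d)$ be \emph{abelian} over~$\Q$. Then the extension
\[
1\longrightarrow\{\pm1\}\longrightarrow\gal(K'/\Q)\longrightarrow\gal(K/\Q)\longrightarrow 1
\]
has abelian middle term, so its class in $\H^2(K/\Q,\{\pm1\})$ is symmetric. By the Guitart--Quer twisting lemma this class is precisely the difference between the sign cocycles of $E_0$ and $E_0^{(d)}$ over~$K$; hence twisting by such a $d$ multiplies $[\epsilon]$ by a symmetric class and leaves the alternating form $\beta$ unchanged. Enlarging the abelian base first does not help either: inflation along a surjection of abelian groups pulls back the commutator pairing faithfully, so a non-symmetric class stays non-symmetric after inflation. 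Under your constraint the pairing $\gamma_d$ is therefore always trivial, and $\beta$ can never be cancelled.

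The paper drops that constraint. Over an abelian base $L\supseteq K$ one twists by $\gamma\in L^\times$ with $L(\sqrt\gamma)/\Q$ merely \emph{Galois}; the twist $E_L^\gamma$ remains completely defined over the abelian field~$L$, while the twisting class---that of the extension $1\to\{\pm1\}\to\gal(L(\sqrt\gamma)/\Q)\to\gal(L/\Q)\to1$---is now allowed to be non-symmetric. As $L$ ranges over finite abelian extensions of~$\Q$, the realizable twisting classes fill out exactly $\ker\bigl(\infl\colon\H^2(\GQA,\{\pm1\})\to\Br(\Q)[2]\bigr)$. The arithmetic input that replaces your ``surjectivity onto alternating forms'' is the statement that $\infl\colon\H^2_\sym(\GQA,\{\pm1\})\to\Br(\Q)[2]$ is an isomorphism; consequently every class in $\H^2(\GQA,\{\pm1\})$ differs from a symmetric one by an element of that kernel, and the argument closes. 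In the explicit biquadratic case treated later in the paper, the twist $E_L^\gamma$ is strongly modular precisely when $L(\sqrt\gamma)/\Q$ is Galois and \emph{non}-abelian, confirming that your abelianness requirement on the auxiliary extension is exactly what blocks the argument.
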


The paper is structured as follows. In section \ref{group_cohomology} we review some basic facts about group cohomology and group extensions that will be needed later. In section \ref{strong_modularity} we recall the construction of two invariants attached to a $\Q$-curve $E$ over a Galois extension $K/\Q$ with Galois group $G$. Both invariants depend only on the $K$-isogeny class of $E$. The first one is a 1-cocycle for $G$ with values in $K^{\times}/{(K^{\times})}^2$ yielding information about the smallest field over which the curve is completely defined, which we call the \emph{minimal field of complete definition} (Definition \ref{minimal_field_def}). The second one is a 2-cocycle for $G$ with values in $(\en(E)\otimes\Q)^{\times}\simeq \Q^{\times}$ carrying information about the strong modularity of $E$ and the field of definition of $E$ up to isogeny (Theorem \ref{descent} and Proposition \ref{inflation}). Section \ref{strong_mod_iso} is dedicated to the proof of our main theorem. The proof relies on two preliminary results. The first one is \cite[Lemma 6.1]{guique}, which characterizes strongly modular twists $E^{\gamma}$ of a $\Q$-curve $E$ over a number field~$K$ in terms of the arithmetic of $K(\sqrt{\lambda})$. The second result is that the 2-torsion of the Brauer group of $\Q$ consists of inflations of symmetric 2-cocycles for $\gal(\QA/\Q)$ with values in $\{\pm1\}$. This essentially allows us to ``twist'' the 2-cocycle of a $\Q$-curve over an abelian number field into a symmetric one. Finally, in section \ref{section_twist} we study in detail an instance of the problem above: Theorem \ref{main} is not effective in general, but what about $\Q$-curves $E$ over a quadratic field $K$? We prove that every such curve has a model completely defined over a biquadratic, and hence abelian, number field $L$ containing $K$; it therefore admits a strongly modular model. We construct explicitly all strongly modular models of~$E$ over~$L$ (Theorem~\ref{primQcurves}), the existence of which is regulated only by the arithmetic of~$L$. We explain how to determine those models that descend to subfields of~$L$, up to isogeny. As a corollary, we show how to construct all strongly modular twists of~$E$ over~$K$. We end the section with several examples exhibiting different behaviours.

\section*{Notation and conventions}

When $A$ is an abelian variety over a field $K$ and $F$ is an extension of $K$, the $\Q$-algebra of the $F$-endomorphisms of $A$ is denoted by $\en_F^0(A)$.

If $G$ and $A$ are abelian groups, we write $\ext^1(G,A)$ for the group of abelian extensions of $G$ by~$A$.

If $F/K$ is a Galois extension of fields and $A$ is a $\gal(F/K)$-module, we denote by $\H^i(F/K,A)$ the $i$-th cohomology group of $A$ with coefficients in $\gal(F/K)$. Analogously, we denote by $\rZ^i(F/K,A)$ the group of $i$-cocycles. If $c\in \rZ^i(F/K,A)$, its cohomology class is denoted by $[c]$. If $G$ is a group and $A$ is a $G$-module, we denote the action of~$G$ on~$A$ using left superscripts: for $\sigma \in G$ and $a\in A$, the image of~$a$ under~$\sigma$ is written as $\s a$. We denote by $A^G$ the submodule of $G$-invariants.

All profinite groups that we mention throughout the paper are endowed with their profinite topology; in particular, finite groups are discrete.

We fix an algebraic closure $\OQ$ of $\Q$, and write $\QA$ for the maximal abelian extension of $\Q$ inside $\OQ$. The absolute Galois group of $\Q$ is denoted by $\GQ$ and the Galois group of $\QA$ over $\Q$ is denoted by $\GQA$.

For a unitary ring $R$, we denote by $R^{\times}$ the group of units.

\section{Group cohomology, group extensions and embedding problems}\label{group_cohomology}

In this section we collect some standard facts and definitions from the theory of group cohomology that we will use later in the paper. For a complete treatment of profinite group cohomology, see for example \cite{neu} or \cite{serre}.

Let $G$ be a profinite group, let $N$ be a normal closed subgroup of~$G$, and let $A$ be a $G$-module. For every $i\geq 1$, the \emph{restriction map} $\res\colon \H^i(G,A)\to \H^i(N,A)$ is the natural map induced by the inclusion $N\subseteq G$, and the \emph{inflation map} $\infl\colon \H^i(G/N,A^N)\to \H^i(G,A)$ is the natural map induced by the projection $G\to G/N$.

The group $\H^1(N,A)$ is endowed with an action of $G/N$ defined in the following way: for $g\in G$ and $[c]\in \H^1(N,A)$, let ${}^g[c]$ be the cohomology class represented by the cocycle $h\mapsto {}^{g\!}c(g^{-1}hg)$ for all $h\in N$. It is easy to check that $N$ acts as the identity, so that the action factors through the quotient $G/N$.
\begin{theorem}[{{\cite[Proposition I.1.6.5]{neu}}}]\label{trans}
	There is a natural map
	$$\trg\colon \H^1(N,A)^{G/N}\longrightarrow \H^2(G/N,A^N)$$
	fitting into an exact sequence
	$$\begin{aligned}
	0 & \longrightarrow \H^1(G/N,A^N) \stackrel{\infl}{\longrightarrow} \H^1(G,A) \stackrel{\res}{\longrightarrow} \H^1(N,A)^{G/N} \stackrel{\trg}{\longrightarrow} \\
	& \stackrel{\trg}{\longrightarrow} \H^2(G/N,A^N) \stackrel{\infl}{\longrightarrow} \H^2(G,A).
	\end{aligned}$$
\end{theorem}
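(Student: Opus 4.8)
The statement is the five-term inflation--restriction exact sequence, and I would prove it by working directly with continuous cochains, writing $Q=G/N$ throughout. The two inflation maps and the restriction map are induced by pullback along $G\to Q$ and by restriction of cochains to $N$; the only point needing an argument here is that $\res$ lands in the invariants $\H^1(N,A)^{Q}$, which follows from the cocycle identity: for $c\in\rZ^1(G,A)$ and $g\in G$, the conjugated cocycle $n\mapsto{}^{g}c(g^{-1}ng)$ differs from $c|_N$ by the coboundary of the element $c(g)\in A$. For the profinite setting I would fix once and for all a continuous set-theoretic section $s\colon Q\to G$ of the projection with $s(1)=1$ (such a section exists for any surjection of profinite groups), so that every cochain constructed below is continuous.

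Injectivity of the first inflation and exactness at $\H^1(G,A)$ are purely elementary. For the former, if an inflated $1$-cocycle is a coboundary on $G$, the element witnessing this is forced to be $N$-invariant, so the cocycle was already a coboundary on $Q$. For the latter, if a $1$-cocycle $c$ on $G$ restricts to a coboundary on $N$, I replace $c$ by a cohomologous cocycle vanishing on $N$; the cocycle relation $c(ng)={}^{n}c(g)+c(n)$ and $c(gn)={}^{g}c(n)+c(g)$ then force its values to be constant on cosets of $N$ and to lie in $A^N$, exhibiting $[c]$ as an inflation.

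The technical heart is the construction of $\trg$. Given $f\in\rZ^1(N,A)$ with $Q$-invariant class, I would extend it to a continuous $1$-cochain by setting $\tilde f(g)=f\bigl(g\,s(gN)^{-1}\bigr)+a_{gN}$, where the elements $a_\sigma\in A$ are chosen to witness that the conjugate cocycle $n\mapsto{}^{s(\sigma)}f(s(\sigma)^{-1}ns(\sigma))$ is cohomologous to $f$ --- this is precisely where the $Q$-invariance of $[f]$ enters. A direct computation then shows that $d\tilde f$ takes values in $A^N$ and depends only on the images of its arguments in $Q$, hence descends to a $2$-cocycle on $Q$ whose class I declare to be $\trg([f])$. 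I expect the main obstacle to be exactly the well-definedness of this class: one must check that changing the section $s$, the witnesses $a_\sigma$, or the representative $f$ alters $d\tilde f$ only by the inflation of a coboundary, and one must keep the continuity of all cochains under control in the profinite setting. Everything past this point is mechanical.

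Granting the construction, the two exactness assertions around $\trg$ fall out. At $\H^1(N,A)^{Q}$: the vanishing $\trg([f])=0$ means $d\tilde f$ becomes a coboundary after correcting $\tilde f$ by an inflated cochain, i.e.\ $f$ extends to a genuine $1$-cocycle on $G$, so $[f]\in\im\res$; conversely a restricted class transgresses to $0$ since one may take $\tilde f$ to be a global cocycle. At $\H^2(Q,A^N)$: the relation $\infl\circ\trg=0$ is immediate because $\infl\trg([f])=[d\tilde f]=0$ in $\H^2(G,A)$, while if $\infl[\bar c]=0$ then $\bar c$ has inflation equal to some coboundary $dB$, and restricting $B$ to $N$ produces a $Q$-invariant $1$-cocycle transgressing back to $[\bar c]$. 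The whole sequence is of course the five-term exact sequence of the Lyndon--Hochschild--Serre spectral sequence $\H^p(Q,\H^q(N,A))\Rightarrow\H^{p+q}(G,A)$, from which it could alternatively be deduced by a purely formal argument.
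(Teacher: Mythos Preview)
The paper does not supply a proof of this theorem; it is quoted as \cite[Proposition~I.1.6.5]{neu} and used as a black box, so there is nothing in the paper to compare your argument against.

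Your sketch is the standard direct cochain verification and is correct in outline. The one genuine subtlety you flag but do not fully resolve is continuity in the profinite setting: a continuous section $s\colon Q\to G$ always exists, but the witnesses $a_\sigma\in A$ for the $Q$-invariance of $[f]$ are not a priori locally constant in~$\sigma$, so the cochain $\tilde f$ as you have written it need not be continuous. The usual remedy is either to reduce to the finite-group case by writing all the cohomology groups as direct limits over open normal subgroups, or to invoke the Lyndon--Hochschild--Serre spectral sequence as you note at the end; either route yields the sequence without further difficulty.
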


The map $\trg$ in Theorem~\ref{trans} is called the \emph{transgression map}, and the exact sequence is called the \emph{inflation-restriction sequence}.

\begin{definition}
	Let $G$ be a profinite abelian group and let $A$ be an abelian group regarded as a $G$-module with trivial action.
	We say that a cocycle $c\in \rZ^2(G,A)$ is \emph{symmetric} if $c(\sigma,\tau)=c(\tau,\sigma)$ for all $\sigma,\tau\in G$.
\end{definition}
Note that the property of $c$ being symmetric only depends on the cohomology class of~$c$, because coboundaries are symmetric by the commutativity of $G$. Moreover, the product of two symmetric cocycles is clearly a symmetric cocycle. Thus the cohomology classes in $\H^2(G,A)$ represented by a symmetric cocycle form a subgroup of $\H^2(G,A)$, which we denote by $\H^2_\sym(G,A)$.

\begin{lemma}\label{extsym}
	Let $G$ be a finite abelian group, and let $A$ be an abelian group equipped with the trivial $G$-action. There is a canonical isomorphism of abelian groups
	$$\ext^1(G,A)\isom \H^2_\sym(G,A).$$
\end{lemma}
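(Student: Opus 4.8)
The plan is to invoke the classical dictionary between second cohomology and central extensions, and then to single out the abelian extensions by a one-line multiplication check. First I would recall the standard correspondence (see \cite{neu,serre}): for the trivial action, $\H^2(G,A)$ is in canonical bijection with the set of equivalence classes of central extensions
\[
0\longrightarrow A\longrightarrow E\longrightarrow G\longrightarrow 1,
\]
under which a class $[c]$ corresponds to the extension $E_c$ with underlying set $A\times G$ and multiplication $(a_1,\sigma)(a_2,\tau)=(a_1+a_2+c(\sigma,\tau),\sigma\tau)$. Before using this, I would observe that an abelian extension of $G$ by $A$ is automatically central with trivial action: if $E$ is abelian then $A$ is central in $E$ and the conjugation action of $G=E/A$ on $A$ is trivial. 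Hence abelian extensions form a subset of those classified by $\H^2(G,A)$, and it remains to determine exactly which classes they give rise to.

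The key step is to compare, inside $E_c$, the two products
\[
(0,\sigma)(0,\tau)=(c(\sigma,\tau),\sigma\tau),\qquad (0,\tau)(0,\sigma)=(c(\tau,\sigma),\sigma\tau),
\]
where I have used that $G$ is abelian, so that $\sigma\tau=\tau\sigma$. These two elements coincide if and only if $c(\sigma,\tau)=c(\tau,\sigma)$. Since every element of $E_c$ is a product of a central element $(a,e)\in A$ with some $(0,\sigma)$, the group $E_c$ is abelian if and only if all the elements $(0,\sigma)$ commute, that is, if and only if $c$ is symmetric. As the symmetry of $c$ depends only on its cohomology class (coboundaries are symmetric, as noted above), this identifies the abelian extensions precisely with the classes in $\H^2_\sym(G,A)$, yielding a bijection $\ext^1(G,A)\isom\H^2_\sym(G,A)$.

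It then remains to match the group laws. Under the standard dictionary the Baer sum of extensions corresponds to the sum of cohomology classes; since $\H^2_\sym(G,A)$ is a subgroup of $\H^2(G,A)$, the Baer sum of two abelian extensions corresponds again to a symmetric class and is therefore abelian, so the restricted map is an isomorphism of abelian groups. The genuine content of the argument is the multiplication comparison above; everything else is the routine bookkeeping of the extension--cohomology correspondence, which I would quote from \cite{neu} or \cite{serre} rather than reprove. The one point that requires a little care, and which I regard as the main obstacle to stating the proof cleanly, is verifying that abelian extensions really are central with trivial action, so that they fall within the scope of $\H^2(G,A)$ in the first place; once this is settled, the symmetry criterion does all the work.
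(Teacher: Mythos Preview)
Your proof is correct and follows exactly the paper's approach: invoke the standard bijection between $\H^2(G,A)$ and central extensions, then observe (via the explicit multiplication on $A\times G$) that the extension group is abelian precisely when the cocycle is symmetric. You supply more detail than the paper---notably the check that abelian extensions are automatically central and that the Baer sum matches the cohomology group law---but the argument is the same.
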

\begin{proof}
	There is a well known isomorphism between $\H^2(G,A)$ and the group of central extensions of $G$ by~$A$; see for example \cite[Theorem 1.2.4]{neu}. Looking at the definition of this isomorphism, one sees that a cocycle is symmetric if and only if in the associated group extension $0\to A\to B\to G\to 1$ the group $B$ is abelian.
\end{proof}

\begin{lemma}\label{extpgr}
	Let $G$ be a finite abelian group, let $A$ be an abelian group equipped with the trivial $G$-action, and let $p$ be a prime such that $pG=0$. Then there is a (non-canonical) isomorphism
	$$\ext^1(G,A)\simeq\Hom(G,A/pA).$$
\end{lemma}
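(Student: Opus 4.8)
The plan is to reduce the statement to the case $G=\Z/p\Z$ by means of the additivity of both functors, and then to compute each side by hand. Since $pG=0$, the group $G$ is a finite-dimensional vector space over the field $\F_p$; choosing an $\F_p$-basis gives an isomorphism $G\simeq(\Z/p\Z)^n$, where $n=\dim_{\F_p}G$. This choice is exactly the source of the non-canonicity in the statement, and it is the only one I will make.

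Next I would exploit that $\ext^1(-,A)$ and $\Hom(-,A/pA)$ are additive contravariant functors, so they send the finite direct sum $G\simeq\bigoplus_{i=1}^n\Z/p\Z$ to the finite direct sum (equivalently, since $n$ is finite, the finite product) of the values on $\Z/p\Z$. Concretely this yields
$$\ext^1(G,A)\simeq\ext^1(\Z/p\Z,A)^n\quad\text{and}\quad\Hom(G,A/pA)\simeq\Hom(\Z/p\Z,A/pA)^n,$$
so it suffices to identify $\ext^1(\Z/p\Z,A)$ with $\Hom(\Z/p\Z,A/pA)$.

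For the first group, I would apply $\Hom(-,A)$ to the free resolution $0\to\Z\xrightarrow{\cdot p}\Z\to\Z/p\Z\to 0$; the resulting complex computes $\ext^1(\Z/p\Z,A)$ as the cokernel of multiplication by~$p$ on $\Hom(\Z,A)\simeq A$, that is, $\ext^1(\Z/p\Z,A)\simeq A/pA$. For the second group, $\Hom(\Z/p\Z,M)$ is canonically the $p$-torsion subgroup $M[p]$ for any abelian group~$M$; taking $M=A/pA$ and using that $A/pA$ is annihilated by~$p$, so that $M[p]=M$, gives $\Hom(\Z/p\Z,A/pA)\simeq A/pA$. Combining the three displays shows that both sides are isomorphic to $(A/pA)^n$, which proves the lemma.

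There is no serious obstacle here: the argument is a bookkeeping exercise once the decomposition is fixed. The two points that deserve care are the invocation of additivity (which is unproblematic because $G$ is a \emph{finite} direct sum, so no completeness issues arise) and the identification $\Hom(\Z/p\Z,A/pA)\simeq A/pA$, which relies precisely on the hypothesis $pG=0$ forcing $p(A/pA)=0$. I note that one could instead route the computation of $\ext^1(G,A)$ through Lemma~\ref{extsym}, replacing $\ext^1$ by $\H^2_\sym(G,A)$, but the direct homological computation above is shorter and makes the non-canonical dependence on the chosen basis transparent.
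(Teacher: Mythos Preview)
Your proof is correct and follows essentially the same route as the paper: reduce to $G=\Z/p\Z$ by additivity of $\Hom$ and $\ext$, compute $\ext^1(\Z/p\Z,A)\simeq A/pA$ from the resolution $0\to\Z\xrightarrow{p}\Z\to\Z/p\Z\to0$, and identify $\Hom(\Z/p\Z,A/pA)\simeq A/pA$. One cosmetic remark: in your closing comments you attribute the identity $p(A/pA)=0$ to the hypothesis $pG=0$, but this holds for any abelian group~$A$; the hypothesis $pG=0$ is used only in the decomposition $G\simeq(\Z/p\Z)^n$.
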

\begin{proof}
	Using the properties of $\Hom$ and $\ext$ (see for example \cite{wei}), we reduce to the case $G=\Z/p\Z$. From the long exact sequence obtained by applying the $\ext$ functor to
	$$
	0 \longrightarrow \Z \stackrel{p}{\longrightarrow} \Z \longrightarrow \Z/p\Z \longrightarrow 0 
	$$
	and the fact that $\ext^1(\Z,A)=0$, we get an isomorphism $\ext^1(\Z/p\Z,A)\simeq A/pA$. The claim follows using the canonical isomorphism $\Hom(\Z/p\Z,A/pA)\simeq A/pA$.
\end{proof}
\begin{theorem}[K\"unneth formula \cite{jan}]\label{prodcoh}
	Let $G_1$, $G_2$ be profinite abelian groups acting trivially on a discrete abelian group $A$. Then there is a canonical isomorphism
	$$\H^2(G_1\times G_2,A)\simeq \H^2(G_1,A)\oplus \H^2(G_2,A)\oplus \Hom(G_1\otimes G_2,A).$$
\end{theorem}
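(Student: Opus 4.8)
The plan is to run the Lyndon--Hochschild--Serre spectral sequence for the direct product, using the coordinate inclusions as sections to force its degeneration in total degree~$2$. Set $G=G_1\times G_2$ and take $N=G_2$ (embedded as $\{1\}\times G_2$), so that $G/N\cong G_1$ and $A^N=A$. Since $G$ is abelian, conjugation is trivial, and as $A$ carries the trivial action every $G_1$-action appearing below is trivial. The spectral sequence $E_2^{p,q}=\H^p(G_1,\H^q(G_2,A))\Rightarrow\H^{p+q}(G,A)$ has exactly three terms of total degree~$2$: $E_2^{2,0}=\H^2(G_1,A)$, $E_2^{0,2}=\H^2(G_2,A)$, and $E_2^{1,1}=\H^1(G_1,\H^1(G_2,A))=\Hom(G_1,\Hom(G_2,A))\cong\Hom(G_1\otimes G_2,A)$, the last identification being the tensor--hom adjunction (with continuous homomorphisms throughout, which is harmless since $A$ is discrete). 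These three groups are precisely the summands in the statement, so it remains to prove that they all survive to $E_\infty$ and that the resulting filtration on $\H^2(G,A)$ splits.

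For the degeneration I would exploit that the projection $p_j\colon G\to G_j$ has the coordinate inclusion $i_j\colon G_j\to G$ as a section, so that the inflation $p_j^*$ is split injective with retraction the restriction $i_j^*$. Applied to $p_1$, this shows the edge map $\H^2(G_1,A)\to\H^2(G,A)$ is injective, which by the low-degree exactness of Theorem~\ref{trans} forces the transgression to vanish and yields $E_\infty^{2,0}=E_2^{2,0}$. Applied to $p_2$, the restriction $\H^2(G,A)\to\H^2(G_2,A)$ is surjective, so the edge map onto $E_\infty^{0,2}\subseteq E_2^{0,2}$ is surjective and hence $E_\infty^{0,2}=E_2^{0,2}$; equivalently, both differentials leaving $E^{0,2}$ vanish. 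The cross term is the delicate one: to see $E_\infty^{1,1}=E_2^{1,1}$ I would bypass the differential $d_2\colon E_2^{1,1}\to E_2^{3,0}$ by an explicit construction. Given a bilinear map $f\colon G_1\times G_2\to A$, the formula $c_f\big((\sigma_1,\sigma_2),(\tau_1,\tau_2)\big)=f(\sigma_1,\tau_2)$ defines a $2$-cocycle on $G$ (a direct check of the cocycle identity, using bilinearity of $f$), giving a homomorphism $\Hom(G_1\otimes G_2,A)\to\H^2(G,A)$ whose composite with the edge map to $E_2^{1,1}$ is the identity; this realizes every class of $E_2^{1,1}$ and shows it survives.

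With all three terms surviving, the spectral sequence only provides a three-step filtration of $\H^2(G,A)$ with the desired graded pieces, and the \textbf{main obstacle} is to split this filtration into an actual direct sum. I would do this by assembling the three maps constructed above into $\Phi\colon\H^2(G_1,A)\oplus\H^2(G_2,A)\oplus\Hom(G_1\otimes G_2,A)\to\H^2(G,A)$, given by $p_1^*$, $p_2^*$ and $f\mapsto[c_f]$, and checking it is an isomorphism directly. For injectivity, restricting along $i_2$ kills $p_1^*$ and the cross term (since $c_f$ vanishes on $\{1\}\times G_2$ and $p_1\circ i_2$ is trivial) while sending $p_2^*$ to the identity, forcing the $\H^2(G_2,A)$-component to vanish; the symmetric argument with $i_1$ kills the $\H^2(G_1,A)$-component; the remaining cross term lies in $\ker(i_2^*)$, hence in filtration level $F^1$, and its image in $E_2^{1,1}$ is the identity on $\Hom(G_1\otimes G_2,A)$, so it too vanishes. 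Surjectivity then follows because $\Phi$ is compatible with the filtration and induces the identity on each graded piece, which are exactly the $E_\infty$-terms computed above. The only remaining bookkeeping is continuity in the profinite setting, handled by noting that every continuous cochain factors through a finite quotient.
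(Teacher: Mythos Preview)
The paper does not prove this statement at all; it is quoted as a known result with a citation to \cite{jan}, so there is no proof in the paper to compare yours against.

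Your argument via the Lyndon--Hochschild--Serre spectral sequence, using the coordinate sections $i_j$ to force degeneration and to split the filtration, is a correct and standard route to the formula. Two points could be tightened. First, you assert without justification that the composite $f\mapsto[c_f]\mapsto(\text{class in }E_2^{1,1})$ is the identity; you lean on this both for degeneration at $E_2^{1,1}$ and for the injectivity of $\Phi$ on the cross term. For injectivity you can sidestep the spectral-sequence identification entirely via the commutator pairing: for any $2$-cocycle $c$ on an abelian group the map $(g,h)\mapsto c(g,h)-c(h,g)$ depends only on $[c]$ (coboundaries being symmetric), and for $c_f$ evaluated at $\big((\sigma_1,1),(1,\tau_2)\big)$ this returns $f(\sigma_1,\tau_2)$, so $[c_f]=0$ forces $f=0$. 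Second, for the degeneration at $E_2^{1,1}$ you do not actually need the explicit $c_f$: the same section argument you used for $E_2^{2,0}$ shows that inflation $\H^3(G_1,A)\to\H^3(G,A)$ is split injective, hence $E_2^{3,0}=E_\infty^{3,0}$, and therefore $d_2\colon E_2^{1,1}\to E_2^{3,0}$ vanishes. With these adjustments the surjectivity of $\Phi$ follows as you say, since $\Phi$ respects the filtration and is an isomorphism on each graded piece.
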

\begin{definition}\label{embedding}
	Let  $L/K$ be a Galois extension of fields with Galois group~$G$. Consider an extension
	\begin{equation}\label{embprob}
		1\longrightarrow H\longrightarrow \widetilde{G}\stackrel{\pi}{\longrightarrow} G\longrightarrow 1
	\end{equation}
	of profinite groups. A \emph{solution to the embedding problem} relative to the extension $L/K$ and the group extension \eqref{embprob} consists of a Galois extension $M/K$ with $L\subseteq M$ and an isomorphism $\iota\colon\gal(M/K)\isom \widetilde{G}$ such that $\pi\circ\iota$ equals the canonical map $\gal(M/K)\to G$.
\end{definition}

It is not hard to see that the solvability of the embedding problem given by the extension \eqref{embprob} depends only on the equivalence class of the extension.
To conclude this section, we recall a result from \cite{kim} for the case $p=2$ that will be useful later.

\begin{lemma}[{\cite[pp.\ 826--827]{kim}}]\label{trivialclass}
	Let $L/K$ be a Galois extension of fields of characteristic different from~$2$ with Galois group~$G$. Let $1\to\{\pm1\}\to\widetilde{G}\to G\to 1$ be an extension of $G$ by $\{\pm1\}$, and let $c\in \H^2(G,\{\pm 1\})$ be the corresponding cohomology class. Then the embedding problem relative to $L/K$ and the above extension has a solution if and only if $c$ is in the kernel of the natural map
$$\varphi_L\colon \H^2(G,\{\pm1\})\to\H^2(G,L^{\times}).$$
\end{lemma}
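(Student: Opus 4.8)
The plan is to pass to the absolute Galois group $G_K=\gal(\overline K/K)$ and to reduce the whole statement to the injectivity of two maps, both consequences of Hilbert's Theorem~90. Write $\phi\colon G_K\to G$ for the (surjective) homomorphism attached to $L/K$, with kernel $G_L=\gal(\overline K/L)$. Since $\{\pm1\}$ has trivial automorphism group, the given extension is central and is classified by $c\in\H^2(G,\{\pm1\})$, and pulling it back along $\phi$ produces a central extension of $G_K$ by $\{\pm1\}$ of class $\infl(c)\in\H^2(G_K,\{\pm1\})$. The starting point is the standard obstruction-theoretic fact: a continuous homomorphism $\tilde\phi\colon G_K\to\widetilde G$ with $\pi\circ\tilde\phi=\phi$ --- a \emph{weak solution} --- exists if and only if this pullback splits, i.e.\ if and only if $\infl(c)=0$.

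Next I would identify the condition $c\in\ker\varphi_L$ with the vanishing of $\infl(c)$ by chasing the commutative square
\[
\begin{array}{ccc}
\H^2(G,\{\pm1\}) & \xrightarrow{\varphi_L} & \H^2(G,L^\times)\\
\downarrow & & \downarrow\\
\H^2(G_K,\{\pm1\}) & \xrightarrow{\iota_*} & \H^2(G_K,\overline K^\times)=\Br(K),
\end{array}
\]
whose vertical maps are inflation and whose bottom map $\iota_*$ is induced by the inclusion $\{\pm1\}\hookrightarrow\overline K^\times$ (this is where $\mathrm{char}\,K\neq2$ enters, ensuring $-1\neq1$). The right-hand inflation is injective, with image the relative Brauer group $\Br(L/K)$, by inflation--restriction together with $\H^1(G_L,\overline K^\times)=0$; and $\iota_*$ is injective because the Kummer sequence $1\to\{\pm1\}\to\overline K^\times\to\overline K^\times\to1$ and $\H^1(G_K,\overline K^\times)=0$ embed $\H^2(G_K,\{\pm1\})$ into $\Br(K)$. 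Chasing the square, $\varphi_L(c)=0$ is equivalent to $\iota_*(\infl(c))=0$, hence to $\infl(c)=0$; this settles the equivalence for weak solutions, and in particular gives the forward implication, since a genuine solution restricts to a weak lift $G_K\to\gal(M/K)\isom\widetilde G$.

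It then remains to upgrade a weak solution to a genuine one, namely to an isomorphism $\gal(M/K)\isom\widetilde G$ over $G$. Given a weak lift $\tilde\phi$, its image surjects onto $G$, so $\tilde\phi(G_K)\cdot\{\pm1\}=\widetilde G$ and $\tilde\phi(G_K)$ has index $1$ or $2$ in $\widetilde G$. If $c\neq0$ the extension is non-split, so an index-$2$ image --- which would be a complement --- is impossible; thus $\tilde\phi$ is automatically surjective and the fixed field of $\ker\tilde\phi$ is a solution containing $L$. If $c=0$ then $\widetilde G\cong G\times\{\pm1\}$ and a solution is obtained by adjoining to $L$ a quadratic extension $F/K$ with $F\cap L=K$, equivalently by twisting $\tilde\phi$ by a quadratic character of $G_K$ nontrivial on $G_L$; such a character exists whenever $K$ has a quadratic extension not contained in $L$, which holds for the number fields to which the lemma is applied.

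I expect this last step to be the real obstacle: the cohomological core only detects weak solutions, and promoting them to surjective ones needs the case distinction above and, in the split case, an auxiliary quadratic extension --- automatic over number fields but a point one genuinely has to address.
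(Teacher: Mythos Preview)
The paper does not give its own proof of this lemma; it is stated with a citation to Kiming and used as a black box. Your argument is correct and is the standard one: obstruction theory identifies the existence of a weak lift $G_K\to\widetilde G$ over $\phi$ with the vanishing of $\infl(c)$ in $\H^2(G_K,\{\pm1\})$, and the commutative square together with the two injectivities coming from Hilbert~90 (for the right-hand inflation $\H^2(G,L^\times)\hookrightarrow\Br(K)$) and from the Kummer sequence (for $\iota_*$) then shows that $\infl(c)=0$ is equivalent to $c\in\ker\varphi_L$.

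You have also correctly isolated the one genuine subtlety. Promoting a weak lift to a proper solution is automatic when $c\neq0$, since an index-$2$ image of $\tilde\phi$ would split the extension; but for $c=0$ one needs a quadratic extension of $K$ not contained in $L$, and over an arbitrary field of characteristic $\neq2$ this can fail. As you note, this does not affect the paper's applications: in the proof of Proposition~\ref{symmetric_cocycles} the lemma is invoked only for nontrivial classes, and in the proof of Theorem~\ref{isothm} one works over finite abelian extensions of $\Q$, where spare quadratic extensions abound (and the case $c=0$ corresponds to the trivial twist anyway). So your caveat is well placed but harmless here.
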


\section{Strongly modular elliptic curves}\label{strong_modularity}
The goal of this section is to introduce the main objects of the paper, namely $\Q$-curves and strongly modular $\Q$-curves, and recall some of their basic properties. For more details, see \cite{golaque}, \cite{guique}, \cite{que} and \cite{quer2}.

\begin{definition}
 Let $K$ be a Galois extension of $\Q$ inside $\OQ$. An elliptic curve $E/K$ is called a \emph{$\Q$-curve} if for every $\sigma \in\gal(K/\Q)$ there exists a $\OQ$-isogeny $\mu_{\sigma}\colon \s E\to E$. We say that $E$ is \emph{completely defined} over $K$ if in addition all $\OQ$-isogenies between the $\s E$ are defined over $K$.
\end{definition}

From now on, all our $\Q$-curves will be without CM.

Let $K$ be a Galois extension of $\Q$ and let $E$ be a $\Q$-curve over $K$.  We recall the definition of two cohomology classes
$$
[\lambda]\in\H^1(K/\Q,K^{\times}/(K^{\times})^2),\quad
[\xi_K(E)]\in\H^2(K/\Q,\Q^{\times})
$$
attached to $E$ (the second under the assumption that $E$ is completely defined over~$K$) that encode arithmetic properties of the curve; see also \cite{pyl} or \cite{que}.

Let $G\coloneqq \gal(K/\Q)$. For every $\sigma\in G$ we choose a $\OQ$-isogeny $\mu_{\sigma}\colon \s E\to E$.

By the argument described in \cite[p.~4]{que}, if $E$ is given by an equation of the form $y^2=x^3+Ax+B$ with $A,B\in K$, then for all $\sigma\in G$ and all isogenies $\mu_{\sigma}\colon \s E\to E$, we can write
$$\mu_{\sigma}(x,y)=\biggl(F(x),\frac{1}{\lambda_{\sigma}}yF'(x)\biggr)$$
for some $F(x)\in K(x)$ and $\lambda_{\sigma}\in \OQ^{\times}$ such that $\lambda_{\sigma}^2\in K^{\times}$.
Because $E$ has no CM, for all $\sigma,\tau\in G$ there exists $m(\sigma,\tau)\in \Q^{\times}$ satisfying
\begin{equation}\label{lambda}
 \lambda_{\sigma}\s\lambda_{\tau}=m(\sigma,\tau)\lambda_{\sigma\tau}
\end{equation}
This shows that the map
$$
\begin{aligned}
\lambda\colon G&\to K^{\times}/{(K^{\times})}^2\\
\sigma&\mapsto\lambda_{\sigma}^2
\end{aligned}
$$
is a 1-cocycle for the natural Galois action of $G$ on $K^{\times}/{(K^{\times})}^2$. A calculation shows that the cohomology class $[\lambda]$ of~$\lambda$ depends only on the $K$-isogeny class of $E$.

\begin{definition}\label{minimal_field_def}
  We call the field $K(\lambda_{\sigma}\colon \sigma\in G)$ the \emph{minimal field of complete definition} for $E/K$.
\end{definition}

\begin{proposition}\label{minimal_field}
  Let $L$ be the minimal field of complete definition for $E$. Then:
  \begin{enumerate}[i)]
   \item $L$ is Galois over $\Q$;
   \item $E_L$ is completely defined over $L$;
   \item $L/K$ is a multiquadratic extension;
   \item if $M$ is a Galois extension of $\Q$ containing $K$ and $E_M$ is completely defined over $M$, then $L\subseteq M$.
  \end{enumerate}
\end{proposition}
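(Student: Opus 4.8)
The plan is to lean on two structural inputs throughout: the relation \eqref{lambda}, which governs the interaction of the scalars $\lambda_\sigma$ with the Galois action, and the absence of complex multiplication, which gives $\en_{\OQ}^0(E)=\Q$ so that any two $\OQ$-isogenies between the same pair of conjugates of $E$ differ by a rational scalar. I write $a_\sigma:=\lambda_\sigma^2\in K^\times$, so that $L=K(\lambda_\sigma:\sigma\in G)$ is the splitting field over $K$ of the separable family $\{x^2-a_\sigma\}_{\sigma\in G}$; in particular $L/K$ is Galois and $L/\Q$ is finite. This already yields (iii): every $g\in\gal(L/K)$ fixes each $a_\sigma$ and hence sends $\lambda_\sigma$ to $\pm\lambda_\sigma$, so $g^2$ fixes all generators of $L/K$ and thus $g^2=\id$. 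Therefore $\gal(L/K)$ is an elementary abelian $2$-group and $L/K$ is multiquadratic.

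For (i) I would show that $L$ is stable under $\GQ$. Let $\nu\in\GQ$ and put $\sigma:=\nu|_K\in G$. Applying \eqref{lambda} with second argument an arbitrary $\tau\in G$, and interpreting $\s\lambda_\tau$ as $\nu(\lambda_\tau)$ (the choice of lift of $\sigma$ affects this only by a sign), one gets
$$\nu(\lambda_\tau)=\pm\,m(\sigma,\tau)\,\frac{\lambda_{\sigma\tau}}{\lambda_\sigma}\in L^\times.$$
Since $K$ is itself $\GQ$-stable, this gives $\nu(L)\subseteq L$; as $\nu|_L$ is an injective $\Q$-linear endomorphism of the finite-dimensional space $L$, in fact $\nu(L)=L$. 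Hence $L/\Q$ is normal, and being separable it is Galois.

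The remaining parts both rest on the observation that each chosen isogeny $\mu_\sigma\colon\s E\to E$ is defined over~$L$, because its coefficients $F\in K(x)$ and $\lambda_\sigma^{-1}$ lie in $K(\lambda_\sigma)\subseteq L$. For (ii), take an arbitrary $\OQ$-isogeny $\phi\colon\s E\to\ta E$; then $\mu_\tau\circ\phi\circ\mu_\sigma^{-1}\in\en_{\OQ}^0(E)=\Q$, so in the isogeny category $\phi=r\,\mu_\tau^{-1}\circ\mu_\sigma$ for some $r\in\Q^\times$. Writing $\mu_\tau^{-1}=(\deg\mu_\tau)^{-1}\hat\mu_\tau$ with $\hat\mu_\tau$ the dual isogeny, every factor is defined over $L$, so $\phi$ is fixed by $\gal(\OQ/L)$ and hence defined over~$L$; this is precisely complete definition of $E_L$ over~$L$. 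For (iv), let $M/\Q$ be Galois with $K\subseteq M$ and $E_M$ completely defined over $M$. Then each $\mu_\sigma$, being a $\OQ$-isogeny between conjugates of $E$, is by hypothesis defined over $M$, and reading off the $y$-coordinate of $\mu_\sigma(x,y)=(F(x),\lambda_\sigma^{-1}yF'(x))$ — whose nonzero coefficients already lie in $K^\times\subseteq M^\times$ — forces $\lambda_\sigma^{-1}\in M$, so $\lambda_\sigma\in M$ and $L\subseteq M$.

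I expect the only genuinely delicate step to be (ii). The point is that complete definition requires \emph{every} isogeny between conjugates to descend to $L$, whereas a naive Galois-descent argument only bounds the field of definition of such a $\phi$ by a quadratic extension of $L$. It is the no-CM hypothesis, through $\en_{\OQ}^0(E)=\Q$, that lets one express an arbitrary $\phi$ in terms of the explicitly $L$-rational isogenies $\mu_\sigma$, closing this gap.
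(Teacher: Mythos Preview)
Your argument is correct and follows the same route as the paper: part~(i) is deduced from relation~\eqref{lambda} exactly as in the paper's one-line observation, and parts~(ii)--(iv), which the paper dismisses as ``clear by construction'', are here spelled out in full, including the explicit use of the no-CM hypothesis $\en_{\OQ}^0(E)=\Q$ in~(ii). The only difference is level of detail, not strategy.
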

\begin{proof}
We observe that if $K/F$ is any Galois extension of subfields of $\OQ$ and $\lambda_1,\ldots,\lambda_n\in \OQ$, then the normal closure of $K(\lambda_1,\ldots,\lambda_n)$ over~$F$ equals $K(\s\lambda_i\colon i\in\{1,\ldots,n\},\sigma \in \gal(\overline{\Q}/F))$.
Point i) follows from this observation together with the identity \eqref{lambda}. Points ii), iii) and iv) are clear by construction.
\end{proof}

To construct the other cohomology class attached to $E$, suppose that $E$ is completely defined over $K$, so that every $\mu_{\sigma}$ is itself defined over $K$. The map
  $$
  \begin{aligned}
  \xi_K(E)\colon G\times G &\longrightarrow (\en_K^0(E))^{\times}\simeq\Q^{\times}\\
  (\sigma,\tau) &\longmapsto \mu_{\sigma}\s\mu_{\tau}\mu_{\sigma\tau}^{-1}
  \end{aligned}
  $$
is a 2-cocycle for the trivial action of $G$ on $\Q^{\times}$. With a small abuse of notation, we will often talk about ``the'' 2-cocycle attached to $E$, without mentioning explicitly the system of isogenies giving rise to it. A direct calculation shows that the cohomology class $[\xi_K(E)]$ of $\xi_K(E)$ depends only on the $K$-isogeny class of $E$. When $K=\OQ$, the 2-cocycle attached to $E$ will be denoted simply by $\xi(E)$.
Note that the image of $[\xi_K(E)]$ under the inflation map $\H^2(G,\Q^{\times})\to \H^2(\GQ,\Q^{\times})$ equals $[\xi(E)]$.

The decomposition $\Q^{\times}\simeq \Q^{\times}_+\times\{\pm 1\}$, where $\Q^{\times}_+$ is the multiplicative group of positive rational numbers, yields a decomposition of $\xi_K$ in a 2-cocycle $\xi_K^{\deg}\in \rZ^2(G,\Q^{\times}_+)$ and a 2-cocycle $\xi_K^{\pm}\in \rZ^2(G,\{\pm1\})$. Similarly, the isomorphism
$$\H^2(G,\Q^{\times})\simeq \H^2(G,\Q^{\times}_+)\times \H^2(G,\{\pm 1\})$$
yields a decomposition of $[\xi_K]$ into a \emph{degree component} $[\xi_K^{\deg}]\in \H^2(G,\Q^{\times}_+)$ and a \emph{sign component} $[\xi_K^{\pm}]\in \H^2(G,\{\pm 1\})$.

\begin{definition}
   A $\Q$-curve $E$ over a number field $K$ is \emph{strongly modular} if its $L$-function $L(E/K,s)$ can be written as a product of $L$-functions attached to holomorphic newforms of weight~$2$ for congruence subgroups of the form $\Gamma_1(N)$.
\end{definition}
The newforms in such a product are unique up to ordering \cite[Proposition 3.4]{brufer}. The modularity theorem \cite{bcdt} implies that all elliptic curves over $\Q$ are strongly modular; this is not true for general $\Q$-curves.
\begin{theorem}[{{\cite[Theorem 2.3]{guique2}}}]\label{strongmod}
  Let $E$ be an elliptic curve without CM over a number field $K$. Then $E$ is strongly modular over $K$ if and only if the following three conditions hold:
  \begin{enumerate}[i)]
  \item $K$ is abelian over $\Q$;
  \item $E$ is completely defined over $K$;
  \item the $2$-cocycle $\xi_K$ attached to $E$ is symmetric, i.e.\ $[\xi_K]\in \H^2_\sym(\gal(K/\Q),\Q^{\times})$.
\end{enumerate}
\end{theorem}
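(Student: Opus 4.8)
The plan is to reduce everything to the restriction of scalars $A\coloneqq\mathrm{Res}_{K/\Q}E$, an abelian variety over $\Q$ of dimension $[K:\Q]$, and to translate strong modularity into a statement about $\en_\Q^0(A)$. Three external inputs drive the argument. First, $L(E/K,s)=L(A/\Q,s)$ (Milne), so $E$ is strongly modular over $K$ if and only if $L(A/\Q,s)$ is a product of weight-$2$ newform $L$-functions. Second, Ribet's theorem together with the Serre conjecture: every abelian variety over $\Q$ of $\GL_2$-type is isogenous to some $A_f$. Third, Faltings' isogeny theorem, which turns an equality $L(A/\Q,s)=\prod_iL(f_i,s)$ into an isogeny $A\sim\prod_iA_{f_i}$. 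Combining these, strong modularity of $E$ over $K$ is equivalent to $A$ being isogenous over $\Q$ to a product of $\GL_2$-type abelian varieties.

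For the implication (i)$+$(ii)$+$(iii)$\Rightarrow$ strong modularity I would use the computation, due to Pyle and Quer, that \emph{when $E$ is completely defined over $K$} the isogenies $\mu_\sigma$ and the relation \eqref{lambda} identify $\en_\Q^0(A)$ with the twisted group algebra $\Q^{\xi_K}[G]$, the $\Q$-algebra with basis $\{e_\sigma\}_{\sigma\in G}$ and multiplication $e_\sigma e_\tau=\xi_K(\sigma,\tau)e_{\sigma\tau}$. Comparing $e_\sigma e_\tau=\xi_K(\sigma,\tau)e_{\sigma\tau}$ with $e_\tau e_\sigma=\xi_K(\tau,\sigma)e_{\tau\sigma}$ shows that this algebra is commutative if and only if $G$ is abelian (condition (i)) and $\xi_K$ is symmetric (condition (iii)). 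Assuming all three conditions, $\en_\Q^0(A)$ is therefore commutative of $\Q$-dimension $[K:\Q]=\dim A$, which forces $A$ to be isogenous to a product of $\GL_2$-type factors; each is modular by Ribet, and $L(E/K,s)=L(A/\Q,s)$ is the corresponding product of newform $L$-functions.

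For the converse I would first recover (ii) and then (i) and (iii). If $E$ is strongly modular, Faltings gives $A\sim\prod_iA_{f_i}$, so $A$ is isogenous to a product of $\GL_2$-type abelian varieties of total dimension $[K:\Q]$; in particular $\en_\Q^0(A)$ contains a commutative \'etale $\Q$-subalgebra of dimension $[K:\Q]$, the maximum allowed inside $\en_{\OQ}^0(A)\cong M_{[K:\Q]}(\Q)$. Since $\en_\Q^0(A)=\en_{\OQ}^0(A)^{\GQ}$ attains this maximal dimension only when every $\OQ$-isogeny between the conjugates ${}^\sigma E$ is already defined over $K$, this yields complete definition (ii). With (ii) established, the identification $\en_\Q^0(A)\cong\Q^{\xi_K}[G]$ applies; its commutativity then gives (i) and (iii) by the criterion of the previous paragraph.

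The step I expect to be the main obstacle is precisely the bridge between the analytic hypothesis and the geometry in the converse: producing the isogeny $A\sim\prod_iA_{f_i}$ (Faltings) and then showing that the resulting maximal commutative endomorphisms can occur only when $E$ is completely defined over $K$ requires a careful analysis of how the $K$-rational endomorphisms of $A$ sit inside $\en_{\OQ}^0(A)$ under the semilinear $\GQ$-action. The purely cohomological facts from Section~\ref{group_cohomology} enter only at the end, where Lemma~\ref{extsym} records the conceptual meaning of the symmetry of $\xi_K$ as commutativity of the associated group extension.
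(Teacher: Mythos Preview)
The paper does not prove this theorem; it is quoted verbatim from Guitart--Quer \cite[Theorem~2.3]{guique2} and used as a black box throughout. There is therefore no proof in the paper to compare your proposal against.

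That said, your outline is essentially the strategy one finds in the cited source: identify $L(E/K,s)$ with $L(\res_{K/\Q}E/\Q,s)$ via \cite{mil1}, convert strong modularity into the statement that $A=\res_{K/\Q}E$ is $\Q$-isogenous to a product of $\GL_2$-type abelian varieties (Faltings plus Ribet/Khare--Wintenberger), and analyse $\en_\Q^0(A)$ through the twisted group algebra $\Q^{\xi_K}[G]$. The forward direction is clean once you note that $\Q^{\xi_K}[G]$ is semisimple by Maschke, so that commutativity forces it to be a product of number fields of total degree $\dim A$.

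The point you flag yourself is the genuine gap. In the converse, the inference ``$\en_\Q^0(A)$ contains an \'etale commutative $\Q$-subalgebra of dimension $[K:\Q]$ $\Rightarrow$ every $\mu_\sigma$ is $K$-rational'' does not follow from the mere inclusion $\en_\Q^0(A)\subseteq\en_{\OQ}^0(A)\cong M_{[K:\Q]}(\Q)$. One has to compute how the semilinear $\GQ$-action on $\en_{\OQ}^0(A)$ moves the off-diagonal blocks built from the $\mu_\sigma$, and check that any non-trivial Galois twist of such a block cuts the fixed subalgebra below dimension $[K:\Q]$. This is where the real work in \cite{guique,guique2} lies; your sketch correctly isolates it but does not carry it out.
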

\begin{remark}\label{rem1}
	A cohomology class in $\H^2(K/\Q,\Q^{\times})$ is symmetric if and only if both its sign component and its degree component are symmetric. The degree component of $[\xi_K]$ is always symmetric, since for $\sigma,\tau\in \gal(K/\Q)$, the fact that $\xi_K(\sigma,\tau)=\mu_{\sigma}\s\mu_{\tau}\mu_{\sigma\tau}^{-1}$ implies $\xi_K(\sigma,\tau)^2=\deg (\mu_{\sigma})\cdot \deg(\mu_{\tau})\cdot\deg(\mu_{\sigma\tau})^{-1}$, and since $K/\Q$ is an abelian extension, this implies $\xi_K(\sigma,\tau)^2=\xi_K(\tau,\sigma)^2$. Therefore condition~iii) above is equivalent to
	\begin{enumerate}
		\item[iii$'$)] $[\xi_K^{\pm}]\in \H^2_\sym(K/\Q,\{\pm 1\})$.
	\end{enumerate}
\end{remark}

\section{Strongly modular \texorpdfstring{$\Q$}{}-curves up to isomorphism}\label{strong_mod_iso}

Every $\Q$-curve $E$ is geometrically isogenous to a strongly modular one. In fact, it is proved in \cite{gola} that there exists a newform $f$ such that the attached abelian variety $A_f$ admits a non-trivial morphism $(A_f)_{\OQ}\to E$. If $L$ is the splitting field of $f$, it is also proved in \cite{gola} that $A_f$ is isogenous over $L$ to $E'^n$ for some positive integer~$n$, where $E'$ is a $\Q$-curve over $L$. Note that $E$ is geometrically isogenous to $E'$ by the  uniqueness of the decomposition up to isogeny. By \cite[Proposition~2]{gogui}, the restriction of scalars $\res_{L/\Q}(E')$ is isogenous to a product of abelian varieties of the form $A_g$ for some newform $g$. It follows that $E'/L$ is strongly modular, because the $L$-function of $E'/L$ coincides with the $L$-function of $\res_{L/\Q}(E')$; see \cite{mil1}.

Therefore, it is natural to ask how strong modularity behaves with respect to geometric \emph{isomorphisms}, rather than isogenies. It turns out that this is a more rigid property. In fact, we will prove the following theorem.

\begin{theorem}\label{isothm}
	A $\Q$-curve $E/\OQ$ has a strongly modular model over a number field if and only if there exist an abelian number field $K$ and a model of $E$ completely defined over $K$.
\end{theorem}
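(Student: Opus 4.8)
The plan is to dispatch one implication instantly and put all the weight on the converse. If $E$ has a strongly modular model $E'$ over a number field~$K$, then conditions~i) and~ii) of Theorem~\ref{strongmod} say precisely that $K$ is abelian over~$\Q$ and that $E'$ is completely defined over~$K$, so $E'$ is already a model of the required kind. For the converse I would fix a model $E_0$ completely defined over an abelian field~$K$ and set $G=\gal(K/\Q)$. By Theorem~\ref{strongmod} it suffices to prove that $\xi_K=\xi_K(E_0)$ is symmetric, and by Remark~\ref{rem1} this reduces to symmetry of the sign component $[\xi_K^{\pm}]\in\H^2(G,\{\pm1\})$, the degree component being automatically symmetric. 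The striking point I would aim for is that $[\xi_K^{\pm}]$ is symmetric \emph{for the given model}, so that $E_0$ is itself strongly modular and no twisting is needed.

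The device is an injectivity argument inside the Brauer group. Since $K/\Q$ is finite Galois, Hilbert's Theorem~90 gives $\H^1(G,K^{\times})=0$, so the cohomology sequence of $1\to\{\pm1\}\to K^{\times}\xrightarrow{2}K^{\times}$ shows that $\H^2(G,\{\pm1\})\to\H^2(G,K^{\times})\hookrightarrow\Br(\Q)$ is injective, and likewise with $K$ replaced by any finite abelian extension. Let $b\in\Br(\Q)[2]$ be the image of $[\xi_K^{\pm}]$. I would then invoke the Brauer-group statement advertised in the introduction: every class in $\Br(\Q)[2]$ is the image of a symmetric cocycle $s\in\H^2_\sym(\gal(\QA/\Q),\{\pm1\})$. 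Choosing a finite abelian $K'\supseteq K$ through which $s$ factors yields a symmetric class $s'\in\H^2_\sym(\gal(K'/\Q),\{\pm1\})$, and both $\infl_{K'/K}[\xi_K^{\pm}]$ and $s'$ have image $b$ under the injection $\H^2(\gal(K'/\Q),\{\pm1\})\hookrightarrow\Br(\Q)$; hence $\infl_{K'/K}[\xi_K^{\pm}]=s'$ is symmetric. Finally, inflation along $\gal(K'/\Q)\twoheadrightarrow G$ reflects symmetry: by Lemma~\ref{extsym} a class is symmetric exactly when its central extension is abelian, and the extension attached to $\infl_{K'/K}[\xi_K^{\pm}]$ is the pullback of the one attached to $[\xi_K^{\pm}]$, so one is abelian if and only if the other is. Thus $[\xi_K^{\pm}]$ is symmetric and $E_0$ is strongly modular.

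The genuine obstacle is the Brauer-group statement, and this is where I expect the real work to lie. The naive hope fails: for a symbol $(a,b)$ the cup product $\chi_a\cup\chi_b$ is precisely the non-symmetric K\"unneth component over $\gal(\Q(\sqrt a,\sqrt b)/\Q)\cong(\Z/2)^2$ (Theorem~\ref{prodcoh}), so one cannot stay over so small a field; the same Brauer class only becomes a \emph{symmetric} inflation after passing to a larger abelian field. My plan is to realize each class of $\Br(\Q)[2]$ over a cyclic $2$-power extension $K''/\Q$ inside~$\QA$: over a cyclic group every central extension is abelian, so every element of $\H^2(\gal(K''/\Q),\{\pm1\})$ is symmetric, its image in $\Br(\Q)$ being the cyclic algebra $(K''/\Q,-1)$; taking products over several such $K''$ and inflating to their (still abelian) compositum keeps the class symmetric. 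The crux is to show that these products exhaust $\Br(\Q)[2]$, i.e.\ to produce cyclic $2$-power extensions with prescribed local behaviour so that the associated cyclic algebras carry the required invariants—a Grunwald--Wang argument whose classical special case at the prime~$2$ is the part demanding care. The explicit description of the kernels of $\varphi_L\colon\H^2(G,\{\pm1\})\to\H^2(G,L^{\times})$ from \cite{kim} (our Lemma~\ref{trivialclass}) is the natural bookkeeping tool. As an alternative packaging of the converse one could route everything through twists, using \cite[Lemma~6.1]{guique} to decide which $E_0^{\gamma}$ are strongly modular in terms of the arithmetic of $K(\sqrt{\lambda})$; but the injectivity argument above shows this detour is unnecessary, since $E_0$ already works.
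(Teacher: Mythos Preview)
Your injectivity claim is false, and with it the whole ``no twisting needed'' strategy collapses. The sequence $1\to\{\pm1\}\to K^{\times}\xrightarrow{\;2\;}K^{\times}$ is \emph{not} short exact: the squaring map on a number field is not surjective, so Hilbert~90 does not give you injectivity of $\varphi_K\colon\H^2(G,\{\pm1\})\to\H^2(G,K^{\times})$. In fact Lemma~\ref{trivialclass} identifies $\ker\varphi_K$ with the set of solvable $\{\pm1\}$-embedding problems over $K/\Q$, and this kernel is generically nontrivial. Concretely, Section~\ref{section_twist} of the paper takes $L=\Q(\sqrt d,\sqrt e)$ biquadratic and writes down the cocycle $\xi_L$ of $E_L$ explicitly (the table just below Lemma~\ref{minimal}); the sign component is one of $\eta_1,\eta_2$, both visibly non-symmetric. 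So $E_L$ is completely defined over an abelian field yet \emph{not} strongly modular---a direct counterexample to your conclusion that ``$E_0$ already works''. Example~4 even exhibits a curve with no strongly modular twist over $L$ at all, so one must in general pass to a strictly larger abelian field.

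Once injectivity fails, your comparison of $\infl_{K'/K}[\xi_K^{\pm}]$ with the symmetric class $s'$ only yields that they differ by some $c\in\ker\varphi_{K'}$, and there is no reason for $c$ to be symmetric. This residual $c$ is precisely the twist: the paper's proof shows that one can always choose $c\in\ker\varphi_{\QA}$ with $[\xi_{\QA}^{\pm}]\cdot c$ symmetric (this is what Proposition~\ref{symmetric_cocycles} is for), and then Lemma~\ref{trivialclass} interprets $c$ as coming from a quadratic extension $L(\sqrt\gamma)/L$, so that Lemma~\ref{quadtw} identifies $E_0^{\gamma}$ as the desired strongly modular model. The ``detour through twists'' you dismiss is therefore not optional but the substance of the argument.
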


Compared to Theorem \ref{strongmod}, the above result states that if one considers strong modularity up to geometric isomorphism, then the symmetry of the cocycle attached to the curve is redundant: if a $\Q$-curve is completely defined over an abelian number field, then there exists an appropriate model of the curve whose 2-cocycle is symmetric.

Let us rephrase the theorem above in cohomological terms. Let $E$ be a $\Q$-curve over a Galois number field $K$ with Galois group $G$, and let $\lambda\in \rZ^1(G,K^{\times}/{(K^{\times})}^2)$ be its associated 1-cocycle. A simple computation shows that for all $\gamma\in K^{\times}$, the cocycle attached to the twisted curve $E^{\gamma}$ is given by $\lambda^{\gamma}(\sigma)\coloneqq\lambda(\sigma)\cdot\frac{\s\gamma}{\gamma}$ for every $\sigma\in G$, and is thus cohomologous to $\lambda$. Now let $L$ be an extension of $K$ that is abelian over $\Q$, and let
$$\psi_L\colon \H^1(K/\Q,K^{\times}/{(K^{\times})}^2)\longrightarrow \H^1(L/\Q,L^{\times}/{(L^{\times})}^2)$$
be the composition of the inflation map with the map induced by the natural morphism $K^{\times}/{(K^{\times})}^2\to L^{\times}/{(L^{\times})}^2$. Proposition \ref{minimal_field} implies that $E$ has a model completely defined over $L$ if and only if $\psi_L([\lambda])$ is trivial. It follows that Theorem \ref{isothm} can be stated in the following way: if $E$ is a $\Q$-curve over $K=\Q(j(E))$, then $E$ has a strongly modular model if and only if $K$ is abelian and $[\lambda]$ belongs to the kernel of the map
$$\H^1(K/\Q,K^{\times}/{(K^{\times})}^2)\longrightarrow \H^1(\GQA,{\QA}^{\times}/{({\QA}^{\times})}^2).$$

\begin{example}
	Consider the elliptic curve $E'\colon y^2=x^3+x+1$ over $\Q$. Let $K$ be the non-Galois number field $\Q(\alpha)$, where $\alpha$ is a root of $x^3+x+1$. The base-changed curve $E'_K$ has a non-trivial, rational 2-torsion point, namely $P=(\alpha,0)$. Now let $\varphi$ be the isogeny with kernel $\{O,P\}$ and let $E\coloneqq E'_K/\ker\varphi$. A Weierstrass equation for $E$ is
	$$E\colon y^2=x^3 - (4+15\alpha^2)x + 22+14\alpha.$$
	One can check that
	$$j(E)=\frac{9580464+51659856\alpha+72060192\alpha^2}{961}\notin \O_K,$$
	so $\Q(j(E))=K$ and $E$ has no CM. Moreover, $E$ is a $\Q$-curve: if $L$ is the Galois closure of $K$, then $E_L$ is $L$-isogenous to all its Galois conjugates, since by construction all of them are $L$-isogenous to $E'_L$. Thus, $E$ is a $\Q$-curve that does not have a strongly modular model.
\end{example}

In order to prove Theorem \ref{isothm}, we need two preliminary results.
The first one is proved in \cite{guique} and characterizes the twists of a $\Q$-curve defined over a Galois number field $K$ that are strongly modular.

\begin{lemma}[{{\cite[Lemma 6.1]{guique}}}]\label{quadtw}
	Let $E$ be a $\Q$-curve completely defined over a Galois number field $K$. Let $\gamma\in K^{\times}$, and let $E^{\gamma}$ be the twisted curve.  Let $\xi_K$ and $\xi_{K}^{\gamma}$ be the $2$-cocycles attached to $E$ and $E^\gamma$, respectively. Then $E^{\gamma}$ is completely defined over $K$ if and only if the field $K(\sqrt{\gamma})$ is Galois over $\Q$. In this case, the cohomology classes $[\xi_K]$ and $[\xi_K^{\gamma}]$ in $\H^2(K/\Q,\Q^{\times})$ differ by the cohomology class in $\H^2(K/\Q,\{\pm 1\})$ attached to the exact sequence
		$$1 \longrightarrow \gal(K(\sqrt{\gamma})/K)\simeq \{\pm 1\} \longrightarrow \gal(K(\sqrt{\gamma})/\Q)\longrightarrow \gal(K/\Q)\longrightarrow 1.$$
	In particular, twisting by $\gamma$ affects only the sign component of $[\xi_K]$ and not the degree component.
\end{lemma}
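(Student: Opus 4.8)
The plan is to work with an explicit Weierstrass model and the scalars $\lambda_\sigma$, build the twisted system of isogenies, and read off both assertions from one computation. Write $E\colon y^2=x^3+Ax+B$ with $A,B\in K$, set $G\coloneqq\gal(K/\Q)$, and for each $\sigma\in G$ fix an isogeny $\mu_\sigma\colon\s E\to E$ of the shape $(x,y)\mapsto(F_\sigma(x),\lambda_\sigma^{-1}yF_\sigma'(x))$, recording that $\lambda_\sigma\,\s\lambda_\tau=\xi_K(\sigma,\tau)\lambda_{\sigma\tau}$ (this is \eqref{lambda}, with $m=\xi_K$, as one sees by pulling back the invariant differential) and that, because $E$ is completely defined over $K$, each $\lambda_\sigma$ lies in $K^{\times}$. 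The twist $E^\gamma\colon y^2=x^3+A\gamma^2x+B\gamma^3$ carries the standard isomorphism $\phi_\gamma\colon E\to E^\gamma$, $(x,y)\mapsto(\gamma x,\gamma^{3/2}y)$, defined over $K(\sqrt\gamma)$. Putting $\mu_\sigma^\gamma\coloneqq\phi_\gamma\circ\mu_\sigma\circ(\s\phi_\gamma)^{-1}\colon\s(E^\gamma)\to E^\gamma$ and composing the three maps, I would check that $\mu_\sigma^\gamma$ again has the standard shape, with scalar
$$\lambda_\sigma^\gamma=\lambda_\sigma\cdot\frac{{}^{\sigma}\!\sqrt\gamma}{\sqrt\gamma}.$$

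For the first assertion, note that by Definition~\ref{minimal_field_def} the minimal field of complete definition of $E^\gamma$ is generated over $K$ by the $\lambda_\sigma^\gamma$, so $E^\gamma$ is completely defined over $K$ if and only if $\lambda_\sigma^\gamma\in K^{\times}$ for every $\sigma$, i.e.\ (using $\lambda_\sigma\in K^{\times}$) if and only if ${}^{\sigma}\!\sqrt\gamma/\sqrt\gamma\in K^{\times}$ for all $\sigma\in G$. Since the square of this ratio is ${}^{\sigma}\gamma/\gamma\in K^{\times}$, the ratio lies in $K$ exactly when $K(\sqrt{{}^{\sigma}\gamma})=K(\sqrt\gamma)$. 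On the other hand, because $K/\Q$ is Galois the $\OQ/\Q$-conjugates of $K(\sqrt\gamma)$ are precisely the fields $K(\sqrt{{}^{\sigma}\gamma})$, $\sigma\in G$; hence $K(\sqrt\gamma)/\Q$ is Galois iff all of them coincide with $K(\sqrt\gamma)$. Comparing the two conditions gives the first claim.

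The second assertion is the delicate point, and here the crux is that $\sqrt\gamma\notin K$: the symbol ${}^{\sigma}\!\sqrt\gamma$ is defined only after choosing, for each $\sigma\in G$, a lift $\tilde\sigma\in\gal(\OQ/\Q)$, and two lifts differ by an element of $\gal(\OQ/K)$ and so change ${}^{\tilde\sigma}\!\sqrt\gamma$ by a sign (this is the $\pm1$ ambiguity in $\mu_\sigma^\gamma$). Fixing such lifts, I would set $b(\sigma)={}^{\tilde\sigma}\!\sqrt\gamma/\sqrt\gamma\in K^{\times}$ and substitute $\lambda_\sigma^\gamma=\lambda_\sigma b(\sigma)$ into $\lambda_\sigma^\gamma\,\s\lambda_\tau^\gamma=\xi_K^\gamma(\sigma,\tau)\lambda_{\sigma\tau}^\gamma$. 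The $\lambda_\sigma$-factors reproduce $\xi_K(\sigma,\tau)$, leaving
$$\xi_K^\gamma(\sigma,\tau)=\xi_K(\sigma,\tau)\cdot\frac{b(\sigma)\,\s b(\tau)}{b(\sigma\tau)},\qquad\frac{b(\sigma)\,\s b(\tau)}{b(\sigma\tau)}=\frac{{}^{\tilde\sigma\tilde\tau}\!\sqrt\gamma}{{}^{\widetilde{\sigma\tau}}\!\sqrt\gamma}\in\{\pm1\}.$$
The last expression equals $\chi\bigl(\tilde\sigma\tilde\tau(\widetilde{\sigma\tau})^{-1}\bigr)$, where $\chi$ identifies $\gal(K(\sqrt\gamma)/K)$ with $\{\pm1\}$ via the action on $\sqrt\gamma$; that is, it is exactly the standard $2$-cocycle of the extension $1\to\gal(K(\sqrt\gamma)/K)\to\gal(K(\sqrt\gamma)/\Q)\to G\to1$ relative to the section $\sigma\mapsto\tilde\sigma|_{K(\sqrt\gamma)}$. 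Thus $[\xi_K^\gamma]$ and $[\xi_K]$ differ by the class of this extension, as claimed. Finally, since the correcting cocycle is $\{\pm1\}$-valued, under $\Q^{\times}\simeq\Q^{\times}_+\times\{\pm1\}$ it lies in the sign part, so it leaves the degree component of $[\xi_K]$ unchanged and affects only the sign component (cf.\ Remark~\ref{rem1}).

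I expect the main obstacle to be precisely the bookkeeping around this lifting ambiguity: one must verify that, although the individual quantities ${}^{\sigma}\!\sqrt\gamma$ and $b(\sigma)$ depend on the chosen lifts, the correction $b(\sigma)\,\s b(\tau)/b(\sigma\tau)$ is a genuine $2$-cocycle whose class is independent of these choices and coincides with the extension class. (A tempting shortcut — conjugating the relation $\mu_\sigma\s\mu_\tau=\xi_K(\sigma,\tau)\mu_{\sigma\tau}$ by $\phi_\gamma$ — appears to give $\xi_K^\gamma=\xi_K$ on the nose; the point is that $\phi_\gamma$ is defined only over $K(\sqrt\gamma)$, so this manipulation silently uses the full Galois action on $\sqrt\gamma$ and hides exactly the sign discrepancy that the careful lift-dependent computation above makes explicit.)
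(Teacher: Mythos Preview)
The paper does not contain a proof of this lemma; it is quoted verbatim from \cite[Lemma~6.1]{guique} and used as a black box, so there is no in-paper argument to compare against.

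For what it is worth, your proof is correct and is the standard way to establish the result. The formula $\lambda_\sigma^\gamma=\lambda_\sigma\cdot{}^{\tilde\sigma}\!\sqrt\gamma/\sqrt\gamma$ is right, and your reduction of the first assertion to the condition $K(\sqrt{{}^{\sigma}\gamma})=K(\sqrt\gamma)$ for all $\sigma$ is exactly the characterization of $K(\sqrt\gamma)/\Q$ being Galois. For the second assertion, your computation of the correction term $b(\sigma)\,\s b(\tau)/b(\sigma\tau)={}^{\tilde\sigma\tilde\tau}\!\sqrt\gamma\big/{}^{\widetilde{\sigma\tau}}\!\sqrt\gamma$ and its identification with the extension-class cocycle associated to the section $\sigma\mapsto\tilde\sigma|_{K(\sqrt\gamma)}$ are both correct; the independence of the class on the choice of lifts is the standard fact that changing the section alters the cocycle by a coboundary. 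Your closing remark about the ``tempting shortcut'' correctly isolates the only subtle point.
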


The second preliminary result shows that the 2-torsion of the Brauer group of $\Q$ is generated by cocycles inflated from certain symmetric ones.

\begin{proposition}\label{symmetric_cocycles}
     The following hold:
     \begin{enumerate}[i)]
      \item There is a canonical isomorphism $\displaystyle \bigoplus_{p\;\text{prime}}\{\pm1\} \isom \H^2_\sym(\GQA,\{\pm1\})$.
      \item The inflation map $\infl\colon \H^2_\sym(\GQA,\{\pm1\})\longrightarrow \H^2(\GQ,\{\pm1\})=\Br(\Q)[2]$ is an isomorphism.
     \end{enumerate}
\end{proposition}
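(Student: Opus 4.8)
The plan is to compute $\H^2_\sym(\GQA,\{\pm1\})$ explicitly for (i), and then to pin down the inflation map place by place for (ii). For part (i) I would first invoke the profinite analogue of Lemma \ref{extsym} to identify $\H^2_\sym(\GQA,\{\pm1\})$ with $\ext^1(\GQA,\{\pm1\})$ in the category of profinite abelian groups. By Kronecker–Weber, $\GQA\cong\hat\Z^\times\cong\prod_p\Z_p^\times$. Since every continuous cocycle with finite coefficients factors through a finite quotient, hence through $\prod_{p\in S}\Z_p^\times$ for a finite set $S$ of primes, the K\"unneth formula (Theorem \ref{prodcoh}) applies level by level; the cross terms $\Hom(\Z_p^\times\otimes\Z_q^\times,\{\pm1\})$ are precisely the classes whose associated extension is non-abelian, so they vanish in the symmetric part, leaving a canonical direct sum $\H^2_\sym(\GQA,\{\pm1\})\cong\bigoplus_p\H^2_\sym(\Z_p^\times,\{\pm1\})$. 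I would then compute each factor: writing $\Z_p^\times\cong\mu_{p-1}\times(1+p\Z_p)$ for odd $p$ and $\Z_2^\times\cong\{\pm1\}\times(1+4\Z_2)$, the torsion-free procyclic factor contributes nothing to $\ext^1(-,\{\pm1\})$ (a continuous extension of $\Z_p$ by $\{\pm1\}$ splits for Sylow reasons), while the finite cyclic factor contributes $\ext^1(\Z/(p-1),\Z/2)\cong(\Z/2)/(p-1)(\Z/2)\cong\Z/2$ by the argument of Lemma \ref{extpgr}. Each factor is thus canonically $\Z/2$, giving the claimed canonical isomorphism.

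For part (ii), injectivity is the cleaner half. I would use the commutative square relating $\varphi_{\QA}\colon\H^2(\GQA,\{\pm1\})\to\H^2(\GQA,\QA^\times)$ to inflation into $\Br(\Q)$. Since $\Br(\QA/\Q)=\H^2(\GQA,\QA^\times)\hookrightarrow\Br(\Q)$ is injective and the identification $\H^2(\GQ,\{\pm1\})\cong\Br(\Q)[2]$ comes from Kummer theory together with Hilbert~90, the kernel of inflation equals $\ker\varphi_{\QA}$. By Lemma \ref{trivialclass} this kernel consists of the classes whose embedding problem over $\QA/\Q$ is solvable. But a symmetric class corresponds (Lemma \ref{extsym}) to an \emph{abelian} extension $1\to\{\pm1\}\to\widetilde G\to\GQA\to1$, and a solution would be an abelian extension $M$ of $\Q$ strictly containing $\QA$, which is impossible by maximality of $\QA$ unless the class is trivial. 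Hence inflation is injective on $\H^2_\sym(\GQA,\{\pm1\})$.

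For surjectivity I would exploit the Albert–Brauer–Hasse–Noether description $\Br(\Q)[2]=\{(\varepsilon_v)_v:\sum_v\varepsilon_v=0\}$, where $v$ runs over all places: projection to the finite primes is an isomorphism $\Br(\Q)[2]\isom\bigoplus_{\ell\ \mathrm{finite}}\Z/2$, the invariant at $\infty$ being recovered by reciprocity. It then suffices to show that the generator $c_p$ from part (i) maps to the standard basis vector $e_p$. Because $c_p$ factors through the $p$-component $\Z_p^\times=\gal(\Q(\zeta_{p^\infty})/\Q)$, its restriction to $\gal(\overline{\Q_\ell}/\Q_\ell)$ for a finite prime $\ell\ne p$ factors through the unramified quotient $\hat\Z$ (as $\Q_\ell(\zeta_{p^\infty})/\Q_\ell$ is unramified), and $\H^2(\hat\Z,\{\pm1\})=0$ since $\hat\Z$ has cohomological dimension $1$; hence $\inv_\ell(\infl c_p)=0$ for every finite $\ell\ne p$. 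Since $\infl c_p\ne0$ by the injectivity just proved, reciprocity forces $\inv_p(\infl c_p)=1/2$, so $\infl c_p$ maps to $e_p$. Thus inflation carries the basis $\{c_p\}$ onto the standard basis, and is an isomorphism.

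I expect the delicate points to be the bookkeeping in (i) — the passage from the finite-level K\"unneth formula to the infinite product, and the compatibility of the K\"unneth decomposition with the symmetric/antisymmetric splitting — together with the single structural input of (ii) that makes the argument collapse, namely the vanishing of the local restrictions away from $p$ via $\operatorname{cd}\hat\Z=1$. Once that vanishing is in place, reciprocity supplies the diagonal invariant for free, so that no explicit Hilbert-symbol computation at $p$ (and in particular no case distinction $p\equiv1$ versus $p\equiv3\bmod4$) is required.
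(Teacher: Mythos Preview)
Your argument follows the same architecture as the paper's proof: identify $\H^2_\sym$ with $\ext^1$, decompose prime by prime, then for (ii) deduce injectivity from the maximality of $\QA$ and surjectivity by showing the $p$-th generator is unramified at every finite $\ell\ne p$; your local vanishing via $\operatorname{cd}(\hat\Z)=1$ is a clean reformulation of the paper's argument through the valuation map on $\Br(\Q_v^\nr/\Q_v)$. One small slip in part~(i): at $p=2$ the parenthetical ``Sylow reasons'' does not apply, since $1+4\Z_2\cong\Z_2$ and $\{\pm1\}$ are both pro-$2$; here you need $\operatorname{cd}(\Z_2)=1$ (the same fact you invoke later for $\hat\Z$), or equivalently the paper's observation that the transition maps $\ext^1(\Z/2^r\Z,\{\pm1\})\to\ext^1(\Z/2^{r+1}\Z,\{\pm1\})$ vanish in the direct limit.
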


\begin{proof}
	To prove i), recall that $\GQA$ is canonically isomorphic to $\widehat{\Z}^{\times}$. By Lemma~\ref{extsym}, we have a canonical isomorphism
	$$
	\varinjlim_n\ext^1((\Z/n\Z)^{\times},\{\pm1\}) \isom \varinjlim_n\H^2_\sym((\Z/n\Z)^{\times},\{\pm1\}) = \H^2_\sym(\widehat{\Z}^{\times},\{\pm1\}).
	$$
	Furthermore, we can rewrite the left-hand side using the canonical isomorphism
	$$
	\bigoplus_{p\;\text{prime}} \varinjlim_r \ext^1((\Z/p^r\Z)^{\times},\{\pm1\}) \isom \varinjlim_n\ext^1((\Z/n\Z)^{\times},\{\pm1\}).
	$$
	It therefore suffices to prove that for every prime number~$p$, there exists a (unique) isomorphism
	$$
	\{\pm1\} \isom \varinjlim_r \ext^1((\Z/p^r\Z)^{\times},\{\pm1\}).
	$$
	For odd $p$, all groups $\ext^1((\Z/p^r\Z)^{\times},\{\pm1\})$, and also their direct limit, are canonically isomorphic to $\ext^1((\Z/p\Z)^{\times},\{\pm1\})$, which has order~$2$ because $(\Z/p\Z)^{\times}$ is cyclic of even order.  For $p=2$, we recall that there are isomorphisms
	$$(\Z/2^r\Z)^{\times}\isom(\Z/4\Z)^{\times}\times\Z/2^{r-2}\Z \quad\text{for all } r\ge2$$
	that are compatible in such a way that we obtain an isomorphism
	$$
	\ext^1((\Z/4\Z)^{\times},\{\pm1\}) \times \varinjlim_r\ext^1(\Z/2^r\Z,\{\pm1\}) \isom \varinjlim_r\ext^1((\Z/2^r\Z)^{\times},\{\pm1\}).$$
	The group $\ext^1((\Z/4\Z)^{\times},\{\pm1\})$ has order~$2$, and it is well known that for all $r\ge1$, the groups $\ext^1(\Z/2^r\Z,\{\pm1\})$ have order~$2$ and the maps $\ext^1(\Z/2^r\Z,\{\pm1\}) \to \ext^1(\Z/2^{r+1}\Z,\{\pm1\})$ are trivial.  This implies the claim.

	To prove ii), consider the exact sequence
	$$1\longrightarrow \gal(\OQ/\QA)\longrightarrow \GQ\longrightarrow \GQA\longrightarrow 1.$$
	The inflation-restriction sequences obtained from the two $\GQ$-modules $\{\pm1\}$ and $\OQ^{\times}$ (with the natural $\GQ$-action) fit into a commutative diagram
	$$
	\begin{xymatrix}{
	   \H^1(\gal(\OQ/\QA),\{\pm 1\})^{\GQA}\ar[r]^-{\trg} \ar[d] & \H^2(\GQA,\{\pm 1\}) \ar[r]^-{\infl}\ar[d]^{\varphi_{\QA}} & \Br(\Q)[2]\ar[d]\\
	    \H^1(\gal(\OQ/\QA),\OQ^{\times})^{\GQA}\ar[r]^-{\trg} & \H^2(\GQA,{\QA}^{\times}) \ar[r]^-{\infl} & \Br(\Q).
	  }\end{xymatrix}$$
	By Hilbert's theorem 90, $\H^1(\gal(\OQ/\QA),\OQ^{\times})$ is trivial, so the map
	$$\infl\colon \H^2(\GQA,{\QA}^{\times}) \longrightarrow \Br(\Q)$$
	is injective. On the other hand, also the map $\Br(\Q)[2]\to \Br(\Q)$ is injective, and therefore we have
	\begin{equation}\label{kernels}
	 \ker\varphi_{\QA}=\ker(\infl\colon \H^2(\GQA,\{\pm 1\})\to \Br(\Q)[2]).
	\end{equation}
	By Lemma \ref{trivialclass}, we have $\H^2_\sym(\GQA,\{\pm1\})\cap \ker\varphi_{\QA}=\{1\}$, since a non-trivial element in this intersection would correspond to a non-trivial extension of $\QA$ that is Galois and abelian over $\Q$. This shows that the map
$$\infl\colon \H^2_\sym(\GQA,\{\pm1\})\to \Br(\Q)[2]$$
	is injective.

	To prove surjectivity, we recall that for every place $v$ of $\Q$, we have a canonical homomorphism $\inv_v\colon\Br(\Q_v)\to \Q/\Z$, and that we have a canonical exact sequence
	$$
	0 \longrightarrow \Br(\Q) \longrightarrow \bigoplus_v\Br(\Q_v) \longrightarrow \Q/\Z \longrightarrow 0,
	$$
	where $v$ runs over all places of $\Q$; the first non-trivial map is the product of the restriction maps $\res_v\colon \Br(\Q)\to\Br(\Q_v)$, and the second one is the sum of the maps $\inv_v$.
	Let $\ell$ be a prime and consider the element $\varepsilon_{\ell}\in \H^2_\sym(\GQA,\{\pm1\})$ that under the isomorphism from i) corresponds to the element $(t_p)_p\in\bigoplus_{p\;\text{prime}}\{\pm1\}$ defined by
	$$t_p=\begin{cases}-1 & \mbox{if } p=\ell\\1 & \mbox{otherwise}.\end{cases}$$
	We will show that $\infl(\varepsilon_{\ell})\in \Br(\Q)[2]$ is ramified precisely at $\ell$ and $\infty$, by looking at the images of $\infl(\varepsilon_\ell)$ under the restriction maps $\res_v$.

	We write $\Q(\zeta_{\ell^\infty})$ for the field obtained by adjoining all roots of unity of $\ell$-power order to~$\Q$.  Let $v$ be a finite place of~$\Q$ different from~$\ell$, and let $\Q_v^\nr$ be the maximal unramified extension of $\Q_v$. Embedding $\Q_v(\zeta_{\ell^{\infty}})$ into $\Q_v^\nr$, we obtain a commutative diagram
	$$
	\begin{xymatrix}{
	\H^2_\sym(\Q(\zeta_{\ell^\infty})/\Q,\{\pm1\}) \ar[rr]^-{\infl}\ar[d]^{\res_v}&& \Br(\Q)\ar[d]^{\res_v}\cr \H^2_\sym(\Q_v(\zeta_{\ell^\infty})/\Q_v,\{\pm1\})\ar[r]^-{\infl} & \Br(\Q_v^\nr/\Q_v)\ar[r]^-\sim \ar[d]^{\mathpzc v_*}& \Br(\Q_v)\cr
	& \H^2(\Q_v^\nr/\Q_v,\Z).
	}\end{xymatrix}
	$$
	Here $\mathpzc v_*$ is induced by the valuation map $\mathpzc v\colon {\Q_v^\nr}^{\times}\to\Z$, and is an isomorphism because the Brauer group of a finite field is trivial \cite[\S X.7 and \S XII.3]{ser3}. Now the composed map $\mathpzc v_*\circ\infl$ vanishes because units have valuation~$0$, so the map $\infl$ in the middle row vanishes as well. Since $\varepsilon_\ell$ is inflated from $\H^2_\sym(\Q(\zeta_{\ell^\infty})/\Q,\{\pm1\})$, it follows that for all finite places $v$ of $\Q$ different from~$\ell$ we have $\res_v(\infl(\varepsilon_{\ell}))=0$ in $\Br(\Q_v)$. Because $\varepsilon_{\ell}\in\H^2_\sym(\GQA,\{\pm1\})$ is non-trivial and the map $\infl\colon \H^2_\sym(\GQA,\{\pm1\})\to \Br(\Q)[2]$ is injective as shown above, it follows that $\infl(\varepsilon_{\ell})\in \Br(\Q)[2]$ is ramified precisely at $\ell$ and~$\infty$, which is what we wanted to show.
\end{proof}

\begin{proof}[Proof of Theorem \ref{isothm}]
	The first implication is clear from Theorem \ref{strongmod}.
	
	Let us now prove the converse. Assume that $E$ has a model $E_0$ completely defined over an abelian number field $K$. Let $\xi_K=\xi_K(E_0)\in \rZ^2(K/\Q,\Q^{\times})$ be the 2-cocycle attached to $E_0$. For every extension $L/K$ that is Galois over $\Q$, we denote by $[\xi_L]$ the inflation of $[\xi_K]$ to $\H^2(L/\Q,\Q^{\times})$. For every Galois extension $M/\Q$, we denote by
	$$\varphi_M\colon \H^2(M/\Q,\{\pm1\})\to \H^2(M/\Q,M^{\times})$$
	the canonical map induced by the inclusion $\{\pm1\}\subseteq M^{\times}$.
	
	We will show that there exists an abelian extension $L/\Q$, containing $K$, such that ${(E_0)}_L$ has a strongly modular twist over $L$. By Theorem \ref{strongmod}, Lemma \ref{quadtw} and Remark \ref{rem1}, this happens if and only if there exist an abelian number field $L$ containing $K$ and an element $\gamma\in L^{\times}$ such that:
	\begin{itemize}
		\item $L(\sqrt{\gamma})$ is Galois over $\Q$;
		\item if $c\in \H^2(L/\Q,\{\pm 1\})$ is the cohomology class attached to the exact sequence
			 $$1\longrightarrow \{\pm 1\}\longrightarrow \gal(L(\sqrt{\gamma})/\Q)\longrightarrow \gal(L/\Q)\longrightarrow 1,$$
			then the class $[\xi_L^{\pm}]\cdot c$ in $\H^2(L/\Q,\{\pm 1\})$ is symmetric.
	\end{itemize}
	By Lemma \ref{trivialclass}, these conditions are satisfied for a given $L$ if and only if there exists $c\in\ker\varphi_L$ such that $[\xi_L^{\pm}]\cdot c$ is symmetric.
	Writing $\H^2(\GQA,\{\pm1\})$ and $\H^2_\sym(\GQA,\{\pm1\})$ as direct limits indexed by finite abelian extensions of~$\Q$ containing $K$, we see that there exist $L$ and $c$ as above if and only if there exists $c'\in\ker\varphi_{\QA}$ such that $[\xi_{\QA}^{\pm}]\cdot c'\in \H^2(\GQA,\{\pm1\})$ is symmetric.

	By \eqref{kernels}, the kernel of $\varphi_{\QA}$ equals the kernel of $\infl\colon \H^2(\GQA,\{\pm1\})\to \Br(\Q)[2]$. Therefore, it remains to prove that there exists $c\in \ker(\infl\colon \H^2(\GQA,\{\pm 1\})\to \Br(\Q)[2])$ such that $[\xi_{\QA}^{\pm}]\cdot c$ is symmetric. This amounts to saying that the image of $[\xi_{\QA}^{\pm}]$ in $\Br(\Q)[2]$ belongs to the image of $\H^2_\sym(\GQA,\{\pm1\})$. By Proposition~\ref{symmetric_cocycles}, this is always the case, and the proof is complete.
\end{proof}

\section{Twists of quadratic \texorpdfstring{$\Q$}{}-curves}\label{section_twist}

Theorem~\ref{isothm} is not effective: it only gives necessary and sufficient conditions for the existence of strongly modular twists, without providing an actual construction of them. In this section, for a $\Q$-curve over a quadratic field $K$, we will make the existence of strongly modular twists over the minimal field of complete definition $L$ effective in terms of the arithmetic of $L$. For certain purposes, such as studying $L$-functions of elliptic curves, it is useful to understand when the curve ``comes from a subfield'' up to isogeny. We will show how to distinguish twists coming from subfields of $L$; this distinction will allow us to characterize all strongly modular twists of $E$ over $K$ (Corollary \ref{corollary_twist}).

\begin{definition}
	Let $E$ be a $\Q$-curve completely defined over a number field $L$. If there exist a subfield $K$ of~$L$ that is Galois over~$\Q$ and a $\Q$-curve $C$ completely defined over $K$ such that $E$ is $L$-isogenous to~$C_L$, we say that $E$ is \emph{inflated from $K$}. If no such $K$ and~$C$ exist, we say that $E$ is \emph{primitive}.
\end{definition}

The reason for the terminology ``inflated'' will be clarified by Proposition \ref{inflation}. We recall the following theorem.

\begin{theorem}[{{\cite[Theorem 8.2]{rib1}}}]\label{descent}
	Let $L/K$ be a Galois extension of number fields, and let $E$ be a $\Q$-curve completely defined over $L$. Then the following are equivalent:
	\begin{enumerate}[i)]
		\item there exist isomorphisms $\mu_{\sigma}\colon \s E\to E$ of elliptic curves up to isogeny over $L$ such that
			  $$\mu_{\sigma}\s\mu_{\tau}=\mu_{\sigma\tau} \qquad \mbox{for all } \sigma,\tau\in \gal (L/K);$$
		\item there exists an elliptic curve $C$ over~$K$ such that $E$ is $L$-isogenous to~$C_L$.
	\end{enumerate}
\end{theorem}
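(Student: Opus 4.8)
My plan is to prove the two implications separately, since they call for quite different arguments.

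The implication (ii)$\Rightarrow$(i) is the elementary one, and I would dispose of it first. Suppose $E$ is $L$-isogenous to $C_L$ for some elliptic curve $C$ over~$K$, and fix an $L$-isogeny $\phi\colon E\to C_L$. Because $C$ is already defined over~$K$, for each $\sigma\in\gal(L/K)$ there is a canonical isomorphism $c_\sigma\colon \s{(C_L)}\isom C_L$, and these form a strict descent datum, i.e.\ $c_\sigma\,\s c_\tau=c_{\sigma\tau}$. Setting $\mu_\sigma\coloneqq \phi^{-1}\circ c_\sigma\circ\s\phi\colon \s E\to E$ then gives isomorphisms up to isogeny over~$L$, and a direct substitution using the cocycle relation for the $c_\sigma$ together with $\s{(\phi^{-1})}=(\s\phi)^{-1}$ shows that $\mu_\sigma\,\s\mu_\tau=\mu_{\sigma\tau}$, which is~(i).

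For the converse (i)$\Rightarrow$(ii) I would carry out Galois descent in the isogeny category by means of restriction of scalars, so as to avoid having to promote the isogeny-descent datum $\{\mu_\sigma\}$ to a datum given by honest isomorphisms. Write $G\coloneqq\gal(L/K)$ and $n\coloneqq[L:K]$, and set $A\coloneqq\res_{L/K}E$, an abelian variety over~$K$ of dimension~$n$. The canonical base-change isomorphism identifies $A_L$ with the product $\prod_{\sigma\in G}\s E$, on which $G$ acts through the descent datum of~$A$ by permuting the factors. Since $E$ has no CM we have $\en_L^0(\s E)\simeq\Q$ for every~$\sigma$, and the isogenies $\mu_\sigma$ identify all the factors $\s E$ with one another up to isogeny; hence $\en_L^0(A_L)\simeq M_n(\Q)$. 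The crux is then to identify the subalgebra $\en_K^0(A)=\en_L^0(A_L)^{G}$ of Galois-invariant endomorphisms. Transporting the $G$-action to $M_n(\Q)$ via the chosen $\mu_\sigma$ and using hypothesis~(i), namely $\mu_\sigma\,\s\mu_\tau=\mu_{\sigma\tau}$, one finds that the action becomes conjugation by the regular permutation representation of~$G$, and that its invariants are exactly the matrices whose $(\sigma,\tau)$-entry depends only on $\sigma^{-1}\tau$; in other words, $\en_K^0(A)$ is canonically the untwisted group algebra $\Q[G]$. Inside $\Q[G]$ we have the averaging idempotent $e\coloneqq\frac1n\sum_{\sigma\in G}\sigma$, which therefore lies in $\en_K^0(A)$ and cuts out an abelian subvariety $C\coloneqq eA$ of~$A$ defined over~$K$. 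Since $e$ projects onto the trivial subrepresentation, which occurs with multiplicity one in the regular representation, $e$ has rank~$1$, so $C$ is an elliptic curve over~$K$ and $C_L=eA_L$ is isogenous to a single copy of~$E$. This produces the required $L$-isogeny and gives~(ii).

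The main obstacle is the endomorphism-algebra computation: one must set up the base-change identification $A_L\simeq\prod_\sigma\s E$ together with the explicit permutation form of the descent datum of~$A$, and then verify that it is precisely the cocycle relation in~(i) that collapses the Galois invariants from the twisted group algebra $\Q^{\xi}[G]$ to the untwisted $\Q[G]$, so that the rank-one idempotent~$e$ is defined over~$K$. Everything else—the elementary direction, the rank count for~$e$, and the passage from the idempotent to an honest elliptic curve~$C$ over~$K$—is routine once this identification is in place.
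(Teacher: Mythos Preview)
The paper does not give its own proof of this theorem: it is quoted verbatim from Ribet \cite[Theorem~8.2]{rib1} and used as a black box, so there is no in-paper argument to compare against.

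Your proposal is essentially correct and follows the same line as Ribet's original proof. The direction (ii)$\Rightarrow$(i) is handled exactly as one should. For (i)$\Rightarrow$(ii), the use of $A=\res_{L/K}E$, the identification $A_L\simeq\prod_{\sigma}\s E$, and the computation that the cocycle relation $\mu_\sigma\,\s\mu_\tau=\mu_{\sigma\tau}$ forces $\en_K^0(A)\simeq\Q[G]$ rather than a twisted group algebra, are precisely the ingredients Ribet uses. One small point worth making explicit when you write it out: the passage from the idempotent $e\in\en_K^0(A)$ to an actual abelian subvariety $C$ over~$K$ requires clearing the denominator of~$e$ and taking the image (or using Poincar\'e reducibility), since $e$ itself is only a $\Q$-endomorphism; and you should remark that the resulting $C$ has dimension~$1$ because $e$ has rank~$1$ in $M_n(\Q)$. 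These are routine, but should be stated rather than left implicit.
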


Now let $K$ and~$L$ be Galois number fields with $K\subseteq L$, and consider the inflation map $\infl^K_L\colon \H^2(K/\Q,\Q^{\times})\to \H^2(L/\Q,\Q^{\times})$.
\begin{proposition}\label{inflation}
	Let $E$ be a $\Q$-curve completely defined over $L$ and let $\xi_L(E)$ be its associated $2$-cocycle.
	\begin{enumerate}[i)]
		\item if $E$ is inflated from $K$, then $[\xi_L(E)]\in \im (\infl^K_L)$;
		\item if $\gal(L/K)$ is contained in the center of $\gal(L/\Q)$ and $[\xi_L(E)]\in \im (\infl^K_L)$, then $E$ is inflated from $K$.
	\end{enumerate}
\end{proposition}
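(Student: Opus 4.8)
The plan is to prove the two implications using Theorem \ref{descent} as the bridge between the cohomological condition $[\xi_L(E)]\in\im(\infl^K_L)$ and the geometric condition of being inflated from~$K$. For part i), suppose $E$ is inflated from~$K$, so there is a $\Q$-curve $C$ completely defined over~$K$ with $E$ being $L$-isogenous to~$C_L$. Since $[\xi_L(E)]$ depends only on the $L$-isogeny class of~$E$, we have $[\xi_L(E)]=[\xi_L(C_L)]$. The key observation is that the $2$-cocycle of the base-change $C_L$ is computed from the very same system of isogenies $\nu_\sigma\colon{}^\sigma C\to C$ that are already defined over~$K$: for $\sigma,\tau\in\gal(L/\Q)$ the quantity $\nu_\sigma\,{}^\sigma\!\nu_\tau\,\nu_{\sigma\tau}^{-1}$ factors through the restriction $\gal(L/\Q)\to\gal(K/\Q)$, which is exactly the statement that $[\xi_L(C_L)]=\infl^K_L([\xi_K(C)])$. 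First I would verify this compatibility of cocycles under base change carefully, since it is the heart of part i). Hence $[\xi_L(E)]\in\im(\infl^K_L)$.

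For part ii), the plan is to reverse this argument, but now the central hypothesis $\gal(L/K)\subseteq Z(\gal(L/\Q))$ becomes essential. Assume $[\xi_L(E)]=\infl^K_L([\eta])$ for some $[\eta]\in\H^2(K/\Q,\Q^{\times})$. I would fix a system $\mu_\sigma\colon{}^\sigma E\to E$ for $\sigma\in\gal(L/\Q)$ realizing the cocycle $\xi_L(E)$, and use the hypothesis that this class is inflated from~$\gal(K/\Q)$ to adjust the $\mu_\sigma$ by scalars so that the resulting cocycle $\xi_L(\sigma,\tau)=\mu_\sigma\,{}^\sigma\!\mu_\tau\,\mu_{\sigma\tau}^{-1}$ factors through $\gal(L/\Q)\to\gal(K/\Q)$. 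The goal is to produce, for the subgroup $H\coloneqq\gal(L/K)$, a subsystem satisfying the cocycle condition $\mu_\sigma\,{}^\sigma\!\mu_\tau=\mu_{\sigma\tau}$ for all $\sigma,\tau\in H$, so that Theorem \ref{descent} (applied to the extension $L/K$) yields an elliptic curve~$C$ over~$K$ with $E$ being $L$-isogenous to~$C_L$. One must then check that such a~$C$ is itself a $\Q$-curve completely defined over~$K$, which follows because $E$ is, together with the compatibility of the isogeny systems.

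The main obstacle, and the reason the centrality hypothesis is needed, lies in passing from ``the cocycle is inflated from $\gal(K/\Q)$'' to ``the restricted cocycle on $H=\gal(L/K)$ is trivial.'' If $\xi_L$ factors through $\gal(K/\Q)$, then for $\sigma,\tau\in H$ one has $\xi_L(\sigma,\tau)=\eta(\bar\sigma,\bar\tau)=\eta(1,1)$, which is a coboundary on~$H$; so after a single scalar normalization the restriction of the system to~$H$ satisfies the multiplicative cocycle relation. The subtlety is that when I modify $\mu_\sigma$ for $\sigma\in H$ to kill this restricted coboundary, I must ensure consistency with the modifications needed to make the full $\gal(L/\Q)$-cocycle descend to $\gal(K/\Q)$; this is precisely where I would invoke that $H$ is central in $\gal(L/\Q)$, so that the Galois action and the extension structure interact compatibly and the normalizations over~$H$ can be chosen $\gal(L/\Q)$-equivariantly. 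I expect that spelling out this equivariant choice of scalars, and confirming that the descended curve~$C$ inherits the $\Q$-curve and complete-definition properties, will be the most delicate part of the argument.
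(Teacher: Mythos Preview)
Your plan for part~i) is correct and matches the paper's argument exactly.

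For part~ii) you have the right ingredients but you mislocate the difficulty. Obtaining a curve $C/K$ with $C_L$ isogenous to~$E$ is immediate: since $\res^L_K\circ\infl^K_L=0$, the hypothesis $[\xi_L(E)]\in\im(\infl^K_L)$ gives $[\xi_L(E)]\in\ker(\res^L_K)$, and Theorem~\ref{descent} applies directly. No centrality is needed here, and there is no ``equivariant normalization'' issue, because the coefficients are in $\Q^\times$ with trivial $G$-action.

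The genuine content is showing that the resulting $C$ is \emph{completely defined} over~$K$, and this is where both the inflation hypothesis and centrality are used. The paper argues by contradiction: choose isogenies $\mu_\sigma\colon{}^\sigma C_L\to C_L$ depending only on $\sigma\bmod\gal(L/K)$, with $\mu_\sigma=\id$ for $\sigma\in\gal(L/K)$. If $C$ were not completely defined over~$K$, some $\mu_\nu$ would satisfy ${}^\vartheta\mu_\nu=-\mu_\nu$ for some $\vartheta\in\gal(L/K)$; a direct computation then gives $\xi_L(C_L)(\vartheta,\nu)=-1$ and $\xi_L(C_L)(\nu,\vartheta)=1$. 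Writing $\xi_L(C_L)=\tilde c\cdot(\delta\alpha)^{-1}$ with $\tilde c$ an inflated cocycle, the normalization $\tilde c(\vartheta,\nu)=\tilde c(\nu,\vartheta)$ forces $\alpha(\vartheta\nu)=-\alpha(\nu\vartheta)$. Centrality enters precisely here, as the equality $\vartheta\nu=\nu\vartheta$, yielding the contradiction. Your plan should be reorganized around this point rather than around scalar normalizations on~$H$.
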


\begin{proof}
	First assume that $E$ is inflated from $K$. Let $C$ be a $\Q$-curve completely defined over $K$ such that $C_L$ is isogenous to~$E$. For all $\overline{\sigma}\in\gal(K/\Q)$, we fix a $K$-isogeny $\mu_{\overline{\sigma}}\colon {}^{\overline{\sigma}} C\to C$. Let $\xi_K(C)\in \rZ^2(K/\Q,\Q^{\times})$ be the 2-cocycle attached to~$C$ via the system of isogenies $\{\mu_{\overline{\sigma}}\}_{\overline{\sigma}}$, so that $\xi_K(C)(\overline{\sigma},\overline{\tau})=\mu_{\overline{\sigma}}{}^{\overline{\sigma}}\mu_{\overline{\tau}}\mu_{\overline{\sigma}\overline{\tau}}^{-1}$. Now for every $\sigma\in\gal(L/\Q)$, we set $\mu_{\sigma}=\mu_{\overline{\sigma}}\colon \s C_L\to C_L$, for $\overline{\sigma}$ the class of $\sigma$ in $\gal(K/\Q)$. Then the cocycle $\xi_L(C)$ corresponding to the system of isogenies $\{\mu_{\sigma}\}_{\sigma}$ represents the inflation of $[\xi_K(C)]$; since $C_L$ and $E$ are $L$-isogenous, we have $[\xi_L(C_L)]=[\xi_L(E)]$ and the claim follows.

	To prove ii), note that since $\res^L_K\circ\infl^K_L=0$, we have $[\xi_L(E)]\in \ker(\res^L_K)$. Thus by Theorem \ref{descent} there exists a $\Q$-curve $C$ over $K$ such that $C_L$ is $L$-isogenous to $E$. Choose a system of isogenies $\{\mu_{\sigma}\colon \s C_L\to C_L\}_{\sigma\in\gal(L/\Q)}$ with the following properties:
	\begin{itemize}
		\item $\mu_{\sigma}=1$ whenever $\sigma\in \gal(L/K)$;
		\item $\mu_{\sigma}=\mu_{\tau}$ whenever $\sigma\equiv\tau\bmod\gal(L/K)$.
	\end{itemize}
	Let $\xi_L(C_L)$ be the 2-cocycle attached to $C_L$ via the above system of isogenies. Now suppose that $C$ is not completely defined over $K$. Then there exist $\nu\in \gal(L/\Q)$, $\vartheta\in\gal(L/K)$ such that $\vt\mu_{\nu}=-\mu_{\nu}$; this implies
	\begin{equation}\label{cocycles}
		\xi_L(C_L)(\vartheta,\nu)=-1\mbox{ and }\xi_L(C_L)(\nu,\vartheta)=1.
	\end{equation}
	On the other hand, by hypothesis $[\xi_L(C_L)]=[\xi_L(E)]$ is inflated, so there exists a cocycle $c\in \rZ^2(K/\Q,\Q^{\times})$ such that $[\xi_L(C_L)]=\infl^K_L([c])$. Let $\widetilde{c}\in \rZ^2(L/\Q,\Q^{\times})$ be the cocycle defined by $\widetilde{c}(\sigma,\tau)=c(\overline{\sigma},\overline{\tau})$ for all $\sigma,\tau\in \gal(L/\Q)$, where $\overline{\cdot}$ denotes the equivalence class modulo $\gal(L/K)$. Let $\alpha\colon \gal(L/\Q)\to\Q^{\times}$ be a map such that
	$$\widetilde{c}=\xi_L(C_L)\cdot\delta\alpha.$$
	Note that the cocycle condition for $c$ implies $c(1,\nu)=c(\nu,1)$ for every $\sigma\in\gal(L/\Q)$. Thus, $\widetilde{c}(\nu,\vartheta)=\widetilde{c}(\vartheta,\nu)$ and by \eqref{cocycles}, this yields $\alpha(\nu\vartheta)=-\alpha(\vartheta\nu)$, a contradiction since $\nu\vartheta=\vartheta\nu$.
\end{proof}

From now on, $E$ is a $\Q$-curve without CM over a quadratic field $K=\Q(\sqrt{d})$, for $d\neq 0,1$ a square-free integer. The non-trivial automorphism of $K$ will be denoted by $\nu$. Moreover, we assume that $E/K$ is not strongly modular, i.e.\ that there exists an isogeny $\mu_{\nu}\colon \n E\to E$ of degree $m$ that cannot be defined over $K$.

\begin{lemma}\label{minimal}
	The minimal field of complete definition $L$ of $E$ has Galois group $C_2\times C_2$ over\/~$\Q$.
\end{lemma}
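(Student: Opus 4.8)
The plan is to write the minimal field $L$ explicitly as $K(\sqrt{\gamma})$ for a single element $\gamma\in K^{\times}$, observe that $[L:\Q]=4$, and then exclude the cyclic possibility $\gal(L/\Q)\cong C_4$ by extracting from the cocycle relation \eqref{lambda} the fact that $N_{K/\Q}(\gamma)$ is an honest square in~$\Q$. Concretely: since $G=\gal(K/\Q)=\{1,\nu\}$, I may choose $\mu_1=\id$ so that $\lambda_1=1$, and set $\gamma\coloneqq\lambda_{\nu}^2\in K^{\times}$. By Definition \ref{minimal_field_def} we then have $L=K(\lambda_{\nu})=K(\sqrt{\gamma})$. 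The standing hypothesis that $\mu_{\nu}$ cannot be defined over~$K$ means exactly that $\lambda_{\nu}\notin K$, i.e.\ $\gamma\notin(K^{\times})^2$ (this is where I invoke the characterization of the minimal field of complete definition, Definition \ref{minimal_field_def} and Proposition \ref{minimal_field}). Hence $[L:K]=2$ and $[L:\Q]=4$, and by Proposition \ref{minimal_field}(i) the extension $L/\Q$ is Galois. Thus $\gal(L/\Q)$ is a group of order~$4$ containing $\gal(L/K)\cong C_2$, so it is either $C_4$ or $C_2\times C_2$, and the whole point is to rule out $C_4$.

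The key input is \eqref{lambda} with $\sigma=\tau=\nu$. Because $\nu^2=1$ and $\lambda_1=1$, this reads $\lambda_{\nu}\cdot{}^{\nu}\lambda_{\nu}=m(\nu,\nu)$ with $m(\nu,\nu)\in\Q^{\times}$. Squaring and noting that $({}^{\nu}\lambda_{\nu})^2={}^{\nu}(\lambda_{\nu}^2)={}^{\nu}\gamma$ depends only on the action of $\nu$ on~$K^{\times}$ (and so is unambiguous, unlike ${}^{\nu}\lambda_{\nu}$ itself), I obtain
\[
N_{K/\Q}(\gamma)=\gamma\cdot{}^{\nu}\gamma=m(\nu,\nu)^2\in(\Q^{\times})^2.
\]
I would stress that this is strictly stronger than what the class $[\lambda]$ alone gives: the $1$-cocycle condition for $\lambda$ in $K^{\times}/(K^{\times})^2$ only yields $N_{K/\Q}(\gamma)\in\Q^{\times}\cap(K^{\times})^2=(\Q^{\times})^2\cup d(\Q^{\times})^2$, which does not separate the two cases (the coset $d(\Q^{\times})^2$ is precisely the $C_4$ case). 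Having a genuine square is what forces the noncyclic group.

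To finish, write $r\coloneqq m(\nu,\nu)$, so ${}^{\nu}\gamma=r^2/\gamma$, and let $\tilde\nu\in\gal(L/\Q)$ be any lift of~$\nu$. Then $\tilde\nu(\lambda_{\nu})^2={}^{\nu}\gamma=(r/\lambda_{\nu})^2$, so $\tilde\nu(\lambda_{\nu})=\epsilon\, r/\lambda_{\nu}$ for some $\epsilon\in\{\pm1\}$; applying $\tilde\nu$ once more and using $\tilde\nu(\epsilon r)=\epsilon r$ gives $\tilde\nu^2(\lambda_{\nu})=\lambda_{\nu}$. Hence $\tilde\nu^2$ fixes $L=K(\lambda_{\nu})$ pointwise, i.e.\ $\tilde\nu^2=\id$. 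Therefore every lift of~$\nu$ has order~$2$; together with the order-$2$ generator of $\gal(L/K)$ this shows $\gal(L/\Q)$ has exponent~$2$, so it must be $C_2\times C_2$. (This matches the classical discriminant criterion distinguishing $C_4$ from $C_2\times C_2$ for $K(\sqrt{\gamma})/\Q$, but the direct computation is self-contained.)

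The main obstacle I anticipate is conceptual rather than computational: one must resist concluding from the mod-squares cocycle alone and instead use the finer relation \eqref{lambda} in $\OQ^{\times}$, where the rational factor $m(\nu,\nu)$ upgrades $N_{K/\Q}(\gamma)$ from an element of $(\Q^{\times})^2\cup d(\Q^{\times})^2$ to an element of $(\Q^{\times})^2$. The delicate bookkeeping is that ${}^{\nu}\lambda_{\nu}$ depends on a choice of lift of~$\nu$ to $\gal(\OQ/\Q)$, whereas the squared quantity ${}^{\nu}\gamma$ does not; keeping track of this is what makes the passage legitimate.
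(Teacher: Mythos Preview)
Your proof is correct, and it takes a genuinely different route from the paper's.

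The paper argues by contradiction: assuming $\gal(L/\Q)\cong C_4$, it sets up the system of isogenies $\mu_1=\mu_{\nu^2}=\id$, $\mu_{\nu^3}=\mu_{\nu}$ and invokes the cohomological fact that $\H^2(C_4,\{\pm1\})=\H^2_\sym(C_4,\{\pm1\})$, so that $\xi_L^{\pm}$ would have to be symmetric. It then computes $\xi_L^{\pm}(\nu,\nu^2)=1$ but $\xi_L^{\pm}(\nu^2,\nu)=-1$ (the latter because ${}^{\nu^2}\!\mu_{\nu}=-\mu_{\nu}$, as $\mu_{\nu}$ is not defined over~$K$ and $E$ has no CM), giving the contradiction. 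Your argument instead stays entirely on the side of the $1$-cocycle $\lambda$: from \eqref{lambda} at $(\nu,\nu)$ you extract $N_{K/\Q}(\gamma)=m(\nu,\nu)^2\in(\Q^{\times})^2$, and then verify by a direct Galois computation that every lift of~$\nu$ squares to the identity. This is more elementary (no appeal to the structure of $\H^2(C_4,\{\pm1\})$ is needed) and has the pleasant feature of making the classical norm criterion for $C_2\times C_2$ versus $C_4$ explicit --- precisely the point you flag when noting that the mod-squares cocycle condition alone would not suffice. The paper's approach, by contrast, meshes more seamlessly with the $\xi_L$-based computations that dominate the rest of Section~\ref{section_twist}, and already exhibits the asymmetry $\xi_L^{\pm}(\vartheta,\nu)\ne\xi_L^{\pm}(\nu,\vartheta)$ that drives Lemma~\ref{stronglymodulartwists}.
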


\begin{proof}
	It is clear from the construction of $L$ in the proof of Proposition \ref{minimal_field} that $L$ is a quadratic extension of $K$.

	Suppose that $\gal(L/\Q)\simeq C_4$. Let $\gal(L/\Q)=\{1,\nu,\nu^2,\nu^3\}$, where by a slight abuse of notation, $\nu\in \gal(L/\Q)$ is a lift of $\nu\in \gal(K/\Q)$. We can set $\mu_{\nu^2}=\id$ and $\mu_{\nu^3}=\mu_{\nu}$. Let $\xi_L^{\pm}$ be the sign part of the 2-cocycle attached to $E_L$ via this system of isogenies. Since $\H^2(C_4,\{\pm1\})\simeq C_2\simeq \H^2_\sym(C_4,\{\pm1\})$, the cocycle $\xi_L^{\pm}$ must be symmetric. Then $\xi_L^{\pm}(\nu,\nu^2)=\mu_{\nu}\n\mu_{\nu^2}\mu_{\nu^3}^{-1}=\mu_{\nu}\mu_{\nu}^{-1}=1$. On the other hand, note that $\leftidx{^{\nu^2}\!}\mu_{\nu}$, which is an isogeny $\n E\to E$, cannot coincide with $\mu_{\nu}$, since this would imply that $\mu_{\nu}$ is defined over $K$. But $\mu_{\nu}$ and $\leftidx{^{\nu^2}\!}\mu_{\nu}$ have the same degree, and therefore $\leftidx{^{\nu^2}\!}\mu_{\nu}=-\mu_{\nu}$. Thus $\xi_L^{\pm}(\nu^2,\nu)=\mu_{\nu^2}\leftidx{^{\nu^2}\!}\mu_{\nu}\mu_{\nu^3}^{-1}=-\mu_{\nu}\mu_{\nu}^{-1}=-1$, which contradicts the symmetry of $\xi_L^{\pm}$.	
\end{proof}
		
Let $e\neq 0,1$ be a squarefree integer such that $L=\Q(\sqrt{d},\sqrt{e})$ is the minimal field of complete definition for $E$. From now on, we set
$$K_e\coloneqq\Q(\sqrt{e}),\quad K_{de}\coloneqq\Q(\sqrt{de})\,\, \mbox{ and }\,\, G\coloneqq\gal(L/\Q)=\{1,\nu,\vartheta,\nu\vartheta\},$$
where $\vartheta$ is the generator of $\gal(L/K)$ and by a small abuse of notation the element $\nu\in G$ restricts to the non-trivial automorphism of $K$, which we also call $\nu$.

Let us compute the 2-cocycle $\xi_L\coloneqq\xi_L(E_L)\in\H^2(L/\Q,\Q^\times)$ attached to $E_L$. Let $\mu_{\vartheta}\colon \vt E_L=E_L\to E_L$ be the identity and let $\mu_{\nu\vartheta}=\mu_{\nu}\colon {}^{\nu\vartheta}E_L=\n E_L\to E_L$. Note that $\vt\mu_{\nu}\colon {}^{\nu\vartheta}E_L=\n E_L\to E_L=\vt E_L$ is an isogeny of the same degree as $\mu_{\nu}$; since $E_L$ has no CM, $\vt\mu_{\nu}$ coincides with $\mu_{\nu}$ up to sign. However, if it were $\vt\mu_{\nu}=\mu_{\nu}$, then $\mu_{\nu}$ would be defined over $K$, a contradiction; thus we have $\vt\mu_{\nu}=-\mu_{\nu}$, and hence ${}^{\nu\vartheta}\mu_{\nu}=-\n\mu_{\nu}$. The isogeny $\mu_{\nu}\n\mu_{\nu}$ equals multiplication by an integer $m\in \Z\setminus\{0,1\}$. By an easy computation we end up with the following table for the cocycle~$\xi_L$:

\begin{center}
	\begin{tabular}{| c || c | c | c | c |}
	\hline
	$\xi_L(\cdot,\cdot)$ & $1$ & $\vartheta$ & $\nu$ & $\nu\vartheta$ \\ \hline\hline
	   $1$ & $1$ & $1$ & $1$ & $1$\\ \hline
	    $\vartheta$ & $1$ & $1$ & $-1$ & $-1$\\ \hline
	$\nu$ & $1$ & $1$ & $m$ & $m$\\ \hline
	$\nu\vartheta$ & $1$ & $1$ & $-m$ & $-m$\\  \hline
	\end{tabular}
\end{center}
The sign component $[\xi_L^{\pm}]\in\H^2(L/\Q,\Q^\times)$ is represented by one of the following two non-cohomologous cocycles, depending on the sign of $m$.
	\begin{center}
		\begin{tabular}{| c || c | c | c | c |}
		    \hline
			$\eta_1(\cdot,\cdot)$ & $1$ & $\vartheta$ & $\nu$ & $\nu\vartheta$ \\ \hline\hline
		    $1$ & $1$ & $1$ & $1$ & $1$\\ \hline
		    $\vartheta$ & $1$ & $1$ & $-1$ & $-1$\\ \hline
			$\nu$ & $1$ & $1$ & $1$ & $1$\\ \hline
			$\nu\vartheta$ & $1$ & $1$ & $-1$ & $-1$\\  \hline
		  \end{tabular}\qquad
			\begin{tabular}{| c || c | c | c | c |}
			    \hline
				$\eta_2(\cdot,\cdot)$ & $1$ & $\vartheta$ & $\nu$ & $\nu\vartheta$ \\ \hline\hline
			    $1$ & $1$ & $1$ & $1$ & $1$\\ \hline
			    $\vartheta$ & $1$ & $1$ & $-1$ & $-1$\\ \hline
				$\nu$ & $1$ & $1$ & $-1$ & $-1$\\ \hline
				$\nu\vartheta$ & $1$ & $1$ & $1$ & $1$\\  \hline
		  \end{tabular}
	\end{center}
The table of $\xi_L$ shows that the curve $E_L$ is not strongly modular over $L$, because of Theorem \ref{strongmod}. The question we want to address is: which quadratic twists of $E_L$ are strongly modular? A first answer is provided by the following lemma.

\begin{lemma}\label{stronglymodulartwists}
  Let $\gamma\in L^{\times}$. Then the twisted curve $E_L^{\gamma}$ is strongly modular over $L$ if and only if $L(\sqrt{\gamma})$ is Galois and non-abelian over\/~$\Q$.
\end{lemma}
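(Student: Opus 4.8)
The plan is to combine the classification of strong modularity from Theorem~\ref{strongmod} (together with Remark~\ref{rem1}) with the description of quadratic twists from Lemma~\ref{quadtw}, and then reduce the whole question to a purely cohomological statement about $\H^2(G,\{\pm1\})$ for $G=\gal(L/\Q)\simeq C_2\times C_2$.

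First I would observe that, since $L/\Q$ is abelian by Lemma~\ref{minimal}, condition~i) of Theorem~\ref{strongmod} holds automatically. By Lemma~\ref{quadtw}, the remaining requirement that $E_L^{\gamma}$ be completely defined over $L$ is equivalent to $L(\sqrt{\gamma})/\Q$ being Galois; and when this is the case, the same lemma tells us that twisting by $\gamma$ leaves the degree component of $[\xi_L]$ unchanged and multiplies the sign component by the class $c_{\gamma}\in\H^2(G,\{\pm1\})$ of the extension $1\to\{\pm1\}\to\gal(L(\sqrt{\gamma})/\Q)\to G\to1$. By Remark~\ref{rem1}, strong modularity of $E_L^{\gamma}$ is then equivalent to symmetry of the class $[\xi_L^{\pm}]\cdot c_{\gamma}$.

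The heart of the argument is the observation that $\H^2_\sym(G,\{\pm1\})$ is a subgroup of index~$2$ in $\H^2(G,\{\pm1\})$. Indeed, combining Lemma~\ref{extsym} with Lemma~\ref{extpgr} gives $\H^2_\sym(G,\{\pm1\})\simeq\Hom(G,\{\pm1\})$, of order~$4$, whereas the K\"unneth formula (Theorem~\ref{prodcoh}) shows that $\H^2(G,\{\pm1\})$ has order~$8$. Now the sign component $[\xi_L^{\pm}]$ is itself non-symmetric: this is precisely what the table for $\xi_L$ records (it equals $\eta_1$ or $\eta_2$), and it is exactly why $E_L$ itself fails to be strongly modular. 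Since $\H^2_\sym(G,\{\pm1\})$ has index~$2$ and $[\xi_L^{\pm}]$ lies outside it, the product $[\xi_L^{\pm}]\cdot c_{\gamma}$ is symmetric if and only if $c_{\gamma}$ lies in the unique non-trivial coset, i.e.\ if and only if $c_{\gamma}$ is itself non-symmetric.

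Finally I would translate this back into field theory: by Lemma~\ref{extsym}, the class $c_{\gamma}$ is non-symmetric if and only if the group $\gal(L(\sqrt{\gamma})/\Q)$ appearing in the extension is non-abelian. Assembling the steps, $E_L^{\gamma}$ is strongly modular if and only if $L(\sqrt{\gamma})/\Q$ is Galois and $\gal(L(\sqrt{\gamma})/\Q)$ is non-abelian, which is the asserted equivalence. (The degenerate case $\gamma\in(L^{\times})^2$, where $L(\sqrt{\gamma})=L$ is abelian over $\Q$ and the twist is trivial, is consistent, since both sides fail.) The step I expect to demand the most care is confirming that $[\xi_L^{\pm}]$ represents a genuinely non-symmetric \emph{class} and not merely a non-symmetric cocycle, together with the bookkeeping showing that the index-$2$ subgroup structure is exactly what converts ``the product is symmetric'' into ``$c_{\gamma}$ is non-symmetric''; everything else is a direct application of the quoted results.
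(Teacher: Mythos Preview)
Your proposal is correct and follows essentially the same approach as the paper: reduce strong modularity to symmetry of $[\xi_L^{\pm}]\cdot c_{\gamma}$ via Theorem~\ref{strongmod}, Lemma~\ref{quadtw} and Remark~\ref{rem1}, compute $|\H^2_\sym(G,\{\pm1\})|=4$ and $|\H^2(G,\{\pm1\})|=8$ using Lemmas~\ref{extsym}, \ref{extpgr} and Theorem~\ref{prodcoh}, and then use that $[\xi_L^{\pm}]$ is non-symmetric (from the computed table) together with the index-$2$ coset structure to conclude that the product is symmetric precisely when $c_{\gamma}$ is non-symmetric, i.e.\ when $\gal(L(\sqrt{\gamma})/\Q)$ is non-abelian. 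Your flagged concern about $[\xi_L^{\pm}]$ being non-symmetric as a \emph{class} is exactly the point the paper handles just before the lemma by noting that the table of $\xi_L$ forces $E_L$ itself not to be strongly modular.
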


\begin{proof}
  Let $\xi_L^{\gamma}$ be the cocycle attached to $E_L^{\gamma}$. By Theorem \ref{strongmod}, Lemma \ref{quadtw} and Remark \ref{rem1}, the curve $E_L^{\gamma}$ is strongly modular if and only if $L(\sqrt{\gamma})$ is Galois over~$\Q$ and $[\xi_L^{\gamma,\pm}]\in \H^2_\sym(L/\Q,\{\pm 1\})$.
  The group $\widetilde{G}\coloneqq\gal(L(\sqrt{\gamma})/\Q)$ is abelian if and only if the 2-cocycle attached to the exact sequence $1\to {\pm1} \to \widetilde{G}\to G\to 1$ is symmetric. Therefore when $\widetilde{G}$ is abelian, by Lemma \ref{quadtw} the symmetry of the cocycle~$\xi_L$ attached to $E_L$ does not change under twisting by $\gamma$, and this shows that $E_L^{\gamma}$ cannot be strongly modular. On the other hand, by Lemmas \ref{extsym} and~\ref{extpgr} we have $\H^2_\sym(G,\{\pm1\})\simeq C_2\times C_2$, while by Theorem \ref{prodcoh} we have $\H^2(G,\{\pm1\})\simeq C_2^3$. This shows that $\H^2(G,\{\pm1\})/\H^2_\sym(G,\{\pm1\})\simeq C_2$, which means that the product of two non-symmetric classes in $\H^2(G,\{\pm1\})$ is symmetric. Whenever $\widetilde{G}$ is non-abelian, we therefore have $[\xi_L^{\gamma}]\in \H^2_\sym(G,\{\pm1\})$.
\end{proof}

\begin{remark}\label{C2_cohomology}
  Let us describe the structure of $\H^2(L/\Q,\{\pm 1\})$ more in detail. Recall that elements of this group correspond to equivalence classes of central extensions of the form $1\to \{\pm 1\}\to \widetilde{G}\to G\to 1$. There are four symmetric cohomology classes and four non-symmetric ones. The symmetric classes correspond to extensions with $\widetilde{G}\simeq C_2\times C_2\times C_2$ or $\widetilde{G}\simeq C_4\times C_2$. The non-symmetric classes correspond to extensions with $\widetilde{G}\simeq D_4$, the dihedral group of order $8$, or $\widetilde{G}\simeq H_8$, the group of quaternions. All extensions with $\widetilde{G}\simeq H_8$ are equivalent to each other, and the corresponding cohomology class is represented by the following cocycle:
  \begin{center}
    \begin{tabular}{| c || c | c | c | c |}
	   \hline
	$h_0(\cdot,\cdot)$ & $1$ & $\vartheta$ & $\nu$ & $\nu\vartheta$ \\ \hline\hline
	$1$ & $1$ & $1$ & $1$ & $1$\\ \hline
	$\vartheta$ & $1$ & $-1$ & $-1$ & $1$\\ \hline
	$\nu$ & $1$ & $1$ & $-1$ & $-1$\\ \hline
	$\nu\vartheta$ & $1$ & $-1$ & $1$ & $-1$\\  \hline
    \end{tabular}
  \end{center}
  On the other hand, there are three non-equivalent extensions with $\widetilde{G}\simeq D_4$. These are uniquely determined by the image in $G$ of the cyclic subgroup of order 4 in $D_4$. If $\gamma\in L^{\times}$ is such that $\widetilde{G}=\gal(L(\sqrt{\gamma})/\Q)\simeq D_4$, $\sigma \in \widetilde{G}$ is an element of order 4 and $\overline{\sigma}$ is its image in $G$, then $L^{\overline{\sigma}}$ is the unique subextension such that $\gal(L(\sqrt{\gamma})/L^{\overline{\sigma}})\simeq C_4$. The following three cocycles represent these classes. The $C_4$-subextension is $L(\sqrt{\gamma})/K,L(\sqrt{\gamma})/K_e,L(\sqrt{\gamma})/K_{de}$, respectively.
	\begin{center}
		\begin{tabular}{| c || c | c | c | c |}
	    \hline
		$h_d(\cdot,\cdot)$ & $1$ & $\vartheta$ & $\nu$ & $\nu\vartheta$ \\ \hline\hline
	    $1$ & $1$ & $1$ & $1$ & $1$\\ \hline
	    $\vartheta$ & $1$ & $-1$ & $1$ & $-1$\\ \hline
		$\nu$ & $1$ & $-1$ & $1$ & $-1$\\ \hline
		$\nu\vartheta$ & $1$ & $1$ & $1$ & $1$\\  \hline
	  \end{tabular}\qquad\qquad
		\begin{tabular}{| c || c | c | c | c |}
	    \hline
		$h_e(\cdot,\cdot)$ & $1$ & $\vartheta$ & $\nu$ & $\nu\vartheta$ \\ \hline\hline
		    $1$ & $1$ & $1$ & $1$ & $1$\\ \hline
		    $\vartheta$ & $1$ & $1$ & $1$ & $1$\\ \hline
			$\nu$ & $1$ & $-1$ & $-1$ & $1$\\ \hline
			$\nu\vartheta$ & $1$ & $-1$ & $-1$ & $1$\\  \hline
		  \end{tabular}
	\end{center}

	\begin{center}
		\begin{tabular}{| c || c | c | c | c |}
		    \hline
				$h_{de}(\cdot,\cdot)$ & $1$ & $\vartheta$ & $\nu$ & $\nu\vartheta$ \\ \hline\hline
			    $1$ & $1$ & $1$ & $1$ & $1$\\ \hline
			    $\vartheta$ & $1$ & $1$ & $1$ & $1$\\ \hline
				$\nu$ & $1$ & $-1$ & $1$ & $-1$\\ \hline
				$\nu\vartheta$ & $1$ & $-1$ & $1$ & $-1$\\  \hline
		  \end{tabular}
	\end{center}
\end{remark}

\subsection{Distinguishing inflated and primitive twists}

  Let $\gamma\in L^{\times}$ be such that $E_L^{\gamma}$ is strongly modular. By Lemma \ref{stronglymodulartwists}, $L(\sqrt{\gamma})$ is a non-abelian Galois extension of $\Q$, and $\widetilde{G}\coloneqq \gal(L(\sqrt{\gamma})/\Q)$ falls in precisely one of the following cases:
  \begin{enumerate}[A.]
    \item $\widetilde{G}\simeq H_8$;
    \item $\widetilde{G}\simeq D_4$ and the unique $C_4$-subextension is $L(\sqrt{\gamma})/K$;
    \item $\widetilde{G}\simeq D_4$ and the unique $C_4$-subextension is $L(\sqrt{\gamma})/K_e$;
    \item $\widetilde{G}\simeq D_4$ and the unique $C_4$-subextension is $L(\sqrt{\gamma})/K_{de}$.
  \end{enumerate}
Recall that the sign component $\xi_L^{\pm}$ of the 2-cocycle attached to $E_L$ equals one of the two classes $[\eta_1]$ and $[\eta_2]$ described below Lemma \ref{minimal}.
\begin{lemma}\label{inducedQcurves}
    If $|m|\in (\Q^{\times})^2$, the curve $E_L^{\gamma}$ is inflated from the subfield $F\subseteq L$, according to the following table:
      \begin{center}
		\begin{tabular}{| c || c | c | c | c |}
		    \hline
				\backslashbox{$[\xi_L^{\pm}]$}{case} & $A.$ & $B.$ & $C.$ & $D.$ \\ \hline\hline
			    $[\eta_1]$ & $K_{de}$ & $K_e$ & $K$ & $\Q$\\ \hline
			    $[\eta_2]$ & $K_e$ & $K_{de}$ & $\Q$ & $K$\\ \hline
		  \end{tabular}
	\end{center}
    If $|m|\notin (\Q^{\times})^2$, the curve $E_L^{\gamma}$ is inflated from $K$ if and only if C.\ or D.\ holds, and it is primitive if and only if A.\ or B.\ holds.
\end{lemma}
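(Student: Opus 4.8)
The plan is to translate the question into one about inflation of cohomology classes and then read off the answer from the explicit description of $\H^2(L/\Q,\Q^\times)$. Since $G=\gal(L/\Q)$ is abelian, for every subfield $F$ of $L$ Galois over $\Q$ (these being $\Q,K,K_e,K_{de}$) the subgroup $\gal(L/F)$ is central, so both parts of Proposition~\ref{inflation} apply: $E_L^\gamma$ is inflated from $F$ if and only if $[\xi_L^\gamma]\in\im(\infl^F_L)$. Using the canonical splitting $\Q^\times\simeq\Q^\times_+\times\{\pm1\}$, which is respected by inflation, I would treat the degree and sign components separately, so that $[\xi_L^\gamma]\in\im(\infl^F_L)$ if and only if both $[\xi_L^{\deg}]\in\im(\infl^F_L)$ and $[\xi_L^{\gamma,\pm}]\in\im(\infl^F_L)$. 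By Lemma~\ref{quadtw} twisting leaves the degree component unchanged and multiplies the sign component by the class $[c_\gamma]$ of the extension $\gal(L(\sqrt\gamma)/\Q)$, which by Remark~\ref{C2_cohomology} equals $[h_0],[h_d],[h_e]$ or $[h_{de}]$ in cases A, B, C, D respectively.

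For the degree component I would observe directly from the table of $\xi_L$ that $\xi_L^{\deg}$ is the inflation from $K=L^{\langle\vartheta\rangle}$ of the $C_2$-cocycle taking the value $|m|$ on the non-trivial pair; hence $[\xi_L^{\deg}]$ always lies in $\im(\infl^K_L)$. To rule out inflation from the other subfields I would use that the restriction $\H^2(G,-)\to\H^2(\langle g\rangle,-)$ kills $\im(\infl^{L^{\langle g\rangle}}_L)$, together with the computation $\res[\xi_L^{\deg}]=|m|\bmod(\Q^\times)^2$ for $g\in\{\nu,\nu\vartheta\}$ and $=1$ for $g=\vartheta$. This shows that $[\xi_L^{\deg}]$ is inflated from $\Q$, from $K_e=L^{\langle\nu\rangle}$, or from $K_{de}=L^{\langle\nu\vartheta\rangle}$ precisely when $|m|\in(\Q^\times)^2$, which is exactly what produces the dichotomy in the statement.

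For the sign component I would first record that, by Lemmas~\ref{extsym} and~\ref{extpgr}, the four symmetric classes are the trivial class and the three inflations $\iota_K,\iota_{K_e},\iota_{K_{de}}$ of the non-trivial class from the quadratic subfields, and that on the sign part $\im(\infl^F_L)=\{1,\iota_F\}$ while $\im(\infl^\Q_L)=\{1\}$. Since the restriction $c\mapsto c(g,g)$ to each cyclic subgroup is a homomorphism $\H^2(G,\{\pm1\})\to\{\pm1\}$, any class is determined by the triple $(c(\nu,\nu),c(\vartheta,\vartheta),c(\nu\vartheta,\nu\vartheta))$, and these multiply under products of classes. Computing the triples for $\eta_1,\eta_2$ and for $h_0,h_d,h_e,h_{de}$ and multiplying, one finds that $[\xi_L^{\gamma,\pm}]=[\xi_L^\pm]\cdot[c_\gamma]$ equals, in cases A, B, C, D: $\iota_{K_{de}},\iota_{K_e},\iota_K,1$ when $[\xi_L^\pm]=[\eta_1]$, and $\iota_{K_e},\iota_{K_{de}},1,\iota_K$ when $[\xi_L^\pm]=[\eta_2]$.

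Finally I would combine the two components. When $|m|\in(\Q^\times)^2$ the degree component is trivial, so the minimal field of inflation is dictated entirely by the sign component, and the list above gives precisely the first table. When $|m|\notin(\Q^\times)^2$ the degree component is non-trivial and inflated from $K$ but from none of $\Q,K_e,K_{de}$; hence $E_L^\gamma$ is inflated from a proper subfield if and only if it is inflated from $K$, which happens exactly when the sign component lies in $\{1,\iota_K\}$, i.e.\ in cases C and D (for both $\eta_1$ and $\eta_2$), and it is primitive in cases A and B. I expect the main obstacle to be organizational: carrying out the eight products in the sign component correctly and matching the four non-abelian extension types to the cocycles $h_0,h_d,h_e,h_{de}$. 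The one genuinely delicate point is that each membership in $\im(\infl^F_L)$ must be \emph{established} by exhibiting an explicit inflation (the degree cocycle, and $\iota_F$ or the trivial class) and each non-membership \emph{ruled out} via $\res\circ\infl=0$, rather than by appealing to an exactness of the inflation--restriction sequence in degree~$2$ that is not available in this generality.
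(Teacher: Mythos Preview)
Your proposal is correct and follows essentially the same route as the paper: reduce via Proposition~\ref{inflation} and the splitting $\Q^\times\simeq\Q^\times_+\times\{\pm1\}$ to checking inflation of the degree and sign components separately, then compute the eight products $[\eta_i]\cdot[h_*]$ and match them against the images $\im(\infl^F_L)$. The only difference is bookkeeping: the paper multiplies explicit cocycle tables and identifies the products with the cocycles $b_d,b_e,b_{de}$, whereas you encode each class by the triple of restrictions $(c(\nu,\nu),c(\vartheta,\vartheta),c(\nu\vartheta,\nu\vartheta))$ to the three cyclic subgroups; one small point to tighten is that the assertion ``any class is determined by the triple'' does not follow from each coordinate being a homomorphism alone---you should either compute the eight triples and observe they are distinct, or note that a central extension of $C_2\times C_2$ by $\{\pm1\}$ in which all three non-trivial elements lift to involutions must split.
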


\begin{proof}
  By Proposition \ref{inflation} and the fact that the inflation map preserves the degree and the sign components and the subgroup of symmetric classes, $E_L^{\gamma}$ is inflated from a Galois subfield $F\subseteq L$ if and only if $[\xi_L^{\gamma,\pm}]$ is the inflation of a cohomology class in $\H^2_\sym(F/\Q,\{\pm 1\})$ and $[\xi_L^{\gamma,\deg}]$ is the inflation of a cohomology class in $\H^2_\sym(F/\Q,\Q^{\times}_+)$.

  By Lemma \ref{quadtw}, $[\xi_L^{\gamma}]$ is the product of $[\xi_L]$ with the class $[t]$ corresponding to the exact sequence
  $$
  1 \longrightarrow \gal(L(\sqrt{\gamma})/L) \longrightarrow \gal(L(\sqrt{\gamma})/\Q) \longrightarrow \gal(L/\Q) \longrightarrow 1.
  $$
  The elements $h_0$, $h_d$, $h_e$ and $h_{de}$ described in Remark \ref{C2_cohomology} represent cases A., B., C.\ and D.\, respectively.
  
  The degree component of $\xi_L$ coincides with that of $\xi_L^{\gamma}$ and is represented by the following cocycle:
	\begin{center}
		\begin{tabular}{| c || c | c | c | c |}
		    \hline
			$\xi_L^{\deg}(\cdot,\cdot)$ & $1$ & $\vartheta$ & $\nu$ & $\nu\vartheta$ \\ \hline\hline
		    $1$ & $1$ & $1$ & $1$ & $1$\\ \hline
		    $\vartheta$ & $1$ & $1$ & $1$ & $1$\\ \hline
			$\nu$ & $1$ & $1$ & $|m|$ & $|m|$\\ \hline
			$\nu\vartheta$ & $1$ & $1$ & $|m|$ & $|m|$\\  \hline
		  \end{tabular}
	\end{center}
  
  If $|m|\in (\Q^{\times})^2$, then $[\xi_L^{\deg}]$ is trivial. Thus $[\xi_L^{\gamma}]$ is inflated if and only if $[\xi_L^{\gamma,\pm}]$ is inflated. The non-trivial symmetric classes in $\H^2(L/\Q,\{\pm1\})$ are represented by the following cocycles $b_d$, $b_e$, $b_{de}$:
	\begin{center}
		\begin{tabular}{| c || c | c | c | c |}
	    \hline
		$b_d(\cdot,\cdot)$ & $1$ & $\vartheta$ & $\nu$ & $\nu\vartheta$ \\ \hline\hline
	    $1$ & $1$ & $1$ & $1$ & $1$\\ \hline
	    $\vartheta$ & $1$ & $1$ & $1$ & $1$\\ \hline
		$\nu$ & $1$ & $1$ & $-1$ & $-1$\\ \hline
		$\nu\vartheta$ & $1$ & $1$ & $-1$ & $-1$\\  \hline
	  \end{tabular}\qquad\qquad
		\begin{tabular}{| c || c | c | c | c |}
	    \hline
		$b_e(\cdot,\cdot)$ & $1$ & $\vartheta$ & $\nu$ & $\nu\vartheta$ \\ \hline\hline
		    $1$ & $1$ & $1$ & $1$ & $1$\\ \hline
		    $\vartheta$ & $1$ & $-1$ & $1$ & $-1$\\ \hline
			$\nu$ & $1$ & $1$ & $1$ & $1$\\ \hline
			$\nu\vartheta$ & $1$ & $-1$ & $1$ & $-1$\\  \hline
		  \end{tabular}
	\end{center}

	\begin{center}
		\begin{tabular}{| c || c | c | c | c |}
		    \hline
				$b_{de}(\cdot,\cdot)$ & $1$ & $\vartheta$ & $\nu$ & $\nu\vartheta$ \\ \hline\hline
			    $1$ & $1$ & $1$ & $1$ & $1$\\ \hline
			    $\vartheta$ & $1$ & $-1$ & $-1$ & $1$\\ \hline
				$\nu$ & $1$ & $-1$ & $-1$ & $1$\\ \hline
				$\nu\vartheta$ & $1$ & $1$ & $1$ & $1$\\  \hline
		  \end{tabular}
	\end{center}
	It is immediately clear that $[b_d]$ (resp.\ $[b_e]$, $[b_{de}]$) is the inflation of the unique non-trivial element in $\H^2(K/\Q,\{\pm1\})$ (resp.\ $\H^2(K_e/\Q,\{\pm1\})$, $\H^2(K_{de}/\Q,\{\pm1\})$). Thus our claim is equivalent to showing that the multiplication table of $[\eta_1],[\eta_2]$ by $[h_0],[h_d],[h_e],[h_{de}]$ is the following:
  \begin{center}
    \begin{tabular}{| c || c | c | c | c |}
	    \hline
	    $\cdot$ & $[h_0]$ & $[h_d]$ & $[h_e]$ & $[h_{de}]$ \\ \hline\hline
	   $[\eta_1]$ & $[b_{de}]$ & $[b_e]$ & $[b_d]$ & $1$\\ \hline
	   $[\eta_2]$ & $[b_e]$ & $[b_{de}]$ & $1$ & $[b_d]$\\ \hline
    \end{tabular}
  \end{center}
  and it is easy to check that this is the case.
	
  Assume now that $|m|\notin {(\Q^{\times})}^2$. The class $[\xi_L^{\deg}]$ is the inflation from $\H^2(K/\Q,\Q^{\times}_+)$ of the class $[c]$, where $c(1,1)=c(1,\nu)=c(\nu,1)=1$ and $c(\nu,\nu)=|m|$, while it is not the inflation of a class lying in $\H^2(F/\Q,\Q^{\times}_+)$ for any $F\in \{\Q,K_e,K_{de}\}$. Thus $E_L^{\gamma}$ is inflated if and only if it is inflated from $K$.

  Therefore it is enough to check when $[\xi_L^{\gamma,\pm}]$ coincides with $[b_K]$. Since $[b_K]=[\eta_1]\cdot[\eta_2]$, the class $[\xi_L^{\gamma}]$ is inflated if and only if the class $[t]$ equals either $[\eta_1]$ or $[\eta_2]$. It is immediate to see that $[\eta_1]=[h_{de}]$ and $[\eta_2]=[h_e]$, and the proof is complete.
\end{proof}

The next step is to give necessary and sufficient conditions for the existence of primitive or inflated twists of $E_L$. Recall that $L=\Q(\sqrt{d},\sqrt{e})$ is a $C_2\times C_2$-extension of $\Q$ and $G=\gal(L/\Q)=\langle\nu,\vartheta\rangle$ where $\n\sqrt{d}=-\sqrt{d}$, $\vt\sqrt{e}=-\sqrt{e}$. For $a,b\in \Q$, we will denote by $(a,b)$ the quaternion algebra over $\Q$ with basis $\{1,i,j,ij\}$ such that $i^2=a$, $j^2=b$, $ij=-ji$. Recall that the \emph{reduced discriminant} of a quaternion algebra $B$ over $\Q$ is the product of the finite primes of $\Q$ where $B$ ramifies. A quaternion algebra is trivial in $\Br(\Q)$ if and only if it has reduced discriminant $1$.

\begin{theorem}[{{\cite[Theorems 4 and 5]{kim}}}]\label{embsol}
  The following hold:
  \begin{enumerate}[i)]
    \item Let $H_8=\{\pm 1,\pm i,\pm j,\pm k\}$ be the group of quaternions. The embedding problem (cf.\ Definition \ref{embedding}) relative to $L/\Q$ and the group extension
    $$1\longrightarrow C_2\longrightarrow H_8\stackrel{\pi}{\longrightarrow} G\longrightarrow 1$$
    is solvable if and only if $(d,de)(e,de)(d,e)=1$ in $\Br(\Q)$, if and only if there exist $v_1,v_2,v_3,w_1,w_2,w_3\in \Q$ such that
    $$\begin{cases}
    d=v_1^2+v_2^2+v_3^2 & \\
    e=w_1^2+w_2^2+w_3^2 & \\
    v_1w_1+v_2w_2+v_3w_3=0.
    \end{cases}$$
    In this case, setting $t=1+\frac{v_1}{\sqrt{d}}+\frac{w_3}{\sqrt{e}}+\frac{v_1w_3-v_3w_1}{\sqrt{de}}$, the extensions solving the problem are exactly the ones of the form $L(\sqrt{qt})$, for $q\in \Q^{\times}$.
    \item Let $D_4=\langle \sigma,\tau\colon \sigma^4=\tau^2=1,\,\, \sigma\tau=\tau\sigma^3\rangle$ be the dihedral group of order $8$. The embedding problem relative to $L/\Q$ and the group extension
    $$1\longrightarrow C_2\longrightarrow D_4\stackrel{\pi}{\longrightarrow} G\longrightarrow 1$$
    where $\pi(\sigma)=\vartheta$ and $\pi(\tau)=\nu$ is solvable if and only if $(-d,e)=1$ in $\Br(\Q)$.

    In this case, if $x,y\in \Q$ are such that $d=ey^2-x^2$, the extensions solving this problem are exactly the ones of the form $L(\sqrt{q(ey+x\sqrt{e})})$ for $q\in \Q^{\times}$.
  \end{enumerate}
\end{theorem}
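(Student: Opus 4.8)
The plan is to handle both embedding problems uniformly through Lemma~\ref{trivialclass}, which turns the question of solvability into the vanishing of an obstruction class, and then to identify that obstruction with the stated quaternion-algebra conditions. Write $\chi_d,\chi_e\colon G\to\{\pm1\}$ for the characters cutting out $\Q(\sqrt d)$ and $\Q(\sqrt e)$, normalised by $\chi_d(\nu)=-1$, $\chi_d(\vartheta)=1$, $\chi_e(\nu)=1$, $\chi_e(\vartheta)=-1$, and use the identification $\H^2(\GQ,\{\pm1\})=\Br(\Q)[2]$ under which a cup product $\chi_a\cup\chi_b$ corresponds to the quaternion algebra $(a,b)$. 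First I would observe that the class of $1\to C_2\to H_8\to G\to 1$ is the cocycle $[h_0]$ of Remark~\ref{C2_cohomology}, and that the class of the $D_4$-extension whose $C_4$-subextension lies over $K$ is $[h_d]$. By Lemma~\ref{trivialclass} each problem is solvable if and only if its class lies in $\ker\varphi_L$, i.e.\ maps to~$0$ under $\varphi_L\colon\H^2(G,\{\pm1\})\to\H^2(G,L^\times)=\Br(L/\Q)$. Since $\Br(L/\Q)$ injects into $\Br(\Q)$ and the composite $\H^2(G,\{\pm1\})\to\Br(L/\Q)\hookrightarrow\Br(\Q)$ coincides with inflation to $\H^2(\GQ,\{\pm1\})=\Br(\Q)[2]$ followed by $\{\pm1\}\hookrightarrow\OQ^\times$, solvability is equivalent to the vanishing in $\Br(\Q)$ of the image of the extension class.

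The heart of the argument is then to compute these images. Working in the cup-product presentation of $\H^2(G,\F_2)$ one finds $[h_0]=\chi_d^2+\chi_d\chi_e+\chi_e^2$ and $[h_d]=\chi_e^2+\chi_d\chi_e$, whence their images in $\Br(\Q)[2]$ are $(d,d)(e,e)(d,e)$ and $(e,e)(d,e)$. Using $(a,a)=(-1,a)$ and bilinearity of the symbol, these simplify to $(d,de)(e,de)(d,e)$ and $(-d,e)$ respectively, which are exactly the Brauer criteria in the statement. (Alternatively, one can evaluate $\varphi_L$ directly on the explicit cocycles $h_0,h_d$ and compare against the symbols; the cup-product route is cleaner but requires care with the sign and basis conventions above.)

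Next I would translate each Brauer condition into its Diophantine form. For~i), the product $(d,de)(e,de)(d,e)$ is precisely the Hasse--Witt invariant of $\langle d,e,de\rangle$ relative to $\langle 1,1,1\rangle$; as these ternary forms already have equal determinant, the product is trivial if and only if $\langle d,e,de\rangle\cong\langle 1,1,1\rangle$ over~$\Q$. Such an isometry is the same datum as a pair of orthogonal vectors $v,w\in\Q^3$ with $|v|^2=d$ and $|w|^2=e$, the third basis vector $v\times w$ then automatically having norm $de$; this is exactly the displayed three-squares system. For~ii), $(-d,e)=1$ says that $-d$ is a norm from $\Q(\sqrt e)$, i.e.\ $-d=x^2-ey^2$, equivalently $d=ey^2-x^2$.

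Finally I would exhibit the solution fields and verify completeness. The surjective lifts $\GQ\to\widetilde G$ of the fixed map $\GQ\to G$ form, once non-empty, a torsor under $\H^1(\GQ,\{\pm1\})=\Q^\times/(\Q^\times)^2$, and the twist by $q\in\Q^\times$ carries a solution $L(\sqrt t)$ to $L(\sqrt{qt})$; this accounts for exactly the families $L(\sqrt{qt})$ and $L(\sqrt{q(ey+x\sqrt e)})$ claimed. It then remains to produce one solution in each case and check it works: for~i) Witt's element $t=1+\tfrac{v_1}{\sqrt d}+\tfrac{w_3}{\sqrt e}+\tfrac{v_1w_3-v_3w_1}{\sqrt{de}}$, and for~ii) the element $ey+x\sqrt e\in\Q(\sqrt e)$, whose transformation behaviour under $G$ (together with its relative norm $ed$) forces the prescribed Galois group. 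I expect this last step to be the main obstacle: unlike the cohomological reduction it is not formal, and verifying directly that $\gal(L(\sqrt t)/\Q)$ is genuinely $H_8$ (respectively $D_4$ with the $C_4$-subextension over $K$), rather than merely an abstract order-$8$ extension of $G$, requires an explicit computation à la Witt. The cup-product identification of $[h_0]$ and $[h_d]$ is the secondary delicate point, demanding consistent sign conventions.
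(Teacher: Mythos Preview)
The paper does not prove this theorem: it is quoted from \cite[Theorems~4 and~5]{kim} and used as a black box in the proofs of Theorem~\ref{primQcurves} and Corollary~\ref{corollary_twist}. There is thus no ``paper's own proof'' against which to compare your attempt.

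That said, your outline is a sound reconstruction of how such a result is established. The reduction via Lemma~\ref{trivialclass} to the vanishing of the extension class in $\Br(\Q)$ is correct, and your cup-product identifications $[h_0]=\chi_d^2+\chi_d\chi_e+\chi_e^2$ and $[h_d]=\chi_e^2+\chi_d\chi_e$ check out against the explicit cocycle tables of Remark~\ref{C2_cohomology}; the resulting Brauer classes $(d,d)(d,e)(e,e)$ and $(e,e)(d,e)$ do simplify to the stated ones. The Diophantine translations are also right: for~i) you are invoking the classification of ternary forms over~$\Q$ (equal rank, discriminant and signature, so equivalence is governed by the Hasse--Witt invariant), and the orthogonal-pair description via $v\times w$ is legitimate; for~ii) the norm condition is immediate. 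Your torsor argument for the parametrisation of solutions is correct as well---non-splitness of the relevant extensions ensures every lift $\GQ\to\widetilde G$ is surjective---and you are right to isolate the direct verification of Witt's element~$t$ (and of $ey+x\sqrt{e}$ in case~ii)) as the one step that is computational rather than formal.
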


Recall that the class $[\xi_L^{\gamma,\pm}]$ equals one of the classes $\eta_1,\eta_2$ described below Lemma~\ref{minimal}.

\begin{theorem}\label{primQcurves}
  There exists $\gamma\in L^{\times}$ such that $E^{\gamma}$ is strongly modular if and only if at least one of the following conditions is satisfied:
  \begin{enumerate}
    \item[A.] the quaternion algebra $(-d,-e)$ has reduced discriminant 2;
    \item[B.] the quaternion algebra $(-d,e)$ is trivial in $\Br(\Q)$;
    \item[C.] the quaternion algebra $(d,-e)$ is trivial in $\Br(\Q)$;
    \item[D.] the quaternion algebra $(d,-de)$ is trivial in $\Br(\Q)$.
  \end{enumerate}
  In particular, if $m\in {(\Q)^{\times}}^2$ then there exists $\gamma$ such that $E^{\gamma}$ is inflated from the subfield $F$, according to the following table:
  \begin{center}
    \begin{tabular}{| c || c | c | c | c |}
    \hline
      \backslashbox{$[\xi_L^{\pm}]$}{case} & $A.$ & $B.$ & $C.$ & $D.$ \\ \hline\hline
      $[\eta_1]$ & $K_{de}$ & $K_e$ & $K$ & $\Q$\\ \hline
      $[\eta_2]$ & $K_e$ & $K_{de}$ & $\Q$ & $K$\\ \hline
      \end{tabular}
  \end{center}
  If $m\notin {(\Q)^{\times}}^2$, then there exists $\gamma$ such that $E^{\gamma}$ is primitive if and only if A. or B. holds, while there exists $\gamma$ such that $E^{\gamma}$ is inflated from $K$ if and only if C. or D. holds.
\end{theorem}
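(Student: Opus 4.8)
The plan is to reduce the existence of a strongly modular twist to the solvability of finitely many embedding problems, and then to translate the criteria of Theorem~\ref{embsol} into the stated conditions on quaternion algebras. By Lemma~\ref{stronglymodulartwists}, the twist $E^{\gamma}$ is strongly modular over $L$ if and only if $L(\sqrt{\gamma})/\Q$ is Galois and non-abelian. Since $\gal(L/\Q)\simeq C_2\times C_2$ and $[L(\sqrt{\gamma}):L]\le 2$, a non-abelian $\widetilde{G}=\gal(L(\sqrt{\gamma})/\Q)$ is an extension of $C_2\times C_2$ by $C_2$, necessarily central because $\mathrm{Aut}(C_2)$ is trivial, and hence isomorphic to $H_8$ or $D_4$. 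By Kummer theory the fields $L(\sqrt{\gamma})$ are exactly the solutions of the corresponding embedding problems relative to $L/\Q$. Thus a strongly modular twist exists if and only if at least one of four embedding problems is solvable: the $H_8$-problem, and the three $D_4$-problems distinguished by which of $K$, $K_e$, $K_{de}$ carries the $C_4$-subextension (the three non-equivalent $D_4$-extensions from Remark~\ref{C2_cohomology}). These are precisely the cases A., B., C., D.\ of the classification following Lemma~\ref{stronglymodulartwists}.

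Next I would invoke Theorem~\ref{embsol}. Part~i) shows that case~A.\ is solvable if and only if $(d,de)(e,de)(d,e)=1$ in $\Br(\Q)$. Part~ii), applied directly, shows that case~B.\ (with $C_4$-subextension over $K=\Q(\sqrt{d})$) is solvable if and only if $(-d,e)=1$. For cases~C.\ and~D.\ I would apply Theorem~\ref{embsol}~ii) after relabelling the generators of $\gal(L/\Q)$ so that $K_e=\Q(\sqrt{e})$, respectively $K_{de}=\Q(\sqrt{de})$, plays the role of $\Q(\sqrt{d})$; this gives the criteria $(-e,d)=1$ and $(-de,d)=1$. (The criterion is independent of the choice of the second generator, since $(-e,e)=(-de,de)=1$.)

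It then remains to match these four criteria with conditions A.--D.\ using the standard identities for the Hilbert symbol over $\Q$: bi-multiplicativity, symmetry $(a,b)=(b,a)$, and $(a,a)=(a,-1)$, $(a,-a)=1$. For cases~B., C.\ and~D.\ this is immediate, as $(-e,d)=(d,-e)$ and $(-de,d)=(d,-de)$. For case~A.\ one computes
$$(d,de)(e,de)(d,e)=(d,-1)(e,-1)(d,e)=(-d,-e)\cdot(-1,-1),$$
so the $H_8$-criterion becomes $(-d,-e)=(-1,-1)$ in $\Br(\Q)$; since a quaternion algebra over $\Q$ is determined by its set of ramified places, this holds exactly when $(-d,-e)$ ramifies only at $2$ and $\infty$, that is, when its reduced discriminant equals~$2$. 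This symbol computation, though elementary, is the step requiring the most care, and is the main obstacle.

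Finally, for the ``in particular'' statements I would combine the equivalences just obtained---each of A.--D.\ is equivalent to the existence of a $\gamma$ realizing the corresponding case---with Lemma~\ref{inducedQcurves}. When $|m|\in(\Q^{\times})^2$, the subfield from which $E^{\gamma}$ is inflated is read off the table of Lemma~\ref{inducedQcurves}, according to whether $[\xi_L^{\pm}]$ equals $[\eta_1]$ or $[\eta_2]$; when $|m|\notin(\Q^{\times})^2$, the same lemma shows that cases~A.\ and~B.\ yield primitive twists while cases~C.\ and~D.\ yield twists inflated from~$K$. This part is a direct assembly of the existence criterion with Lemma~\ref{inducedQcurves}.
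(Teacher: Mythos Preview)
Your proof is correct and follows essentially the same route as the paper: reduction via Lemma~\ref{stronglymodulartwists} to the four non-abelian embedding problems, invocation of Theorem~\ref{embsol} with the appropriate relabellings for the three $D_4$-cases, and then Lemma~\ref{inducedQcurves} for the ``in particular'' statements. The only minor difference is in case~A.: the paper recasts the $H_8$-criterion as an equivalence of the ternary forms $\frac{1}{de}X^2+dY^2+eZ^2$ and $X^2+Y^2+Z^2$ and computes their Hasse--Witt invariants, whereas you manipulate the Hilbert-symbol product $(d,de)(e,de)(d,e)$ from Theorem~\ref{embsol}~i) directly---both computations lead to $(-d,-e)=(-1,-1)$, i.e.\ reduced discriminant~$2$.
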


\begin{proof}
  By Lemma \ref{stronglymodulartwists}, $E^{\gamma}$ is strongly modular if and only if $L(\sqrt{\gamma})/\Q$ is a non-abelian Galois extension. Thus, there exists $\gamma$ such that $E^{\gamma}$ is strongly modular if and only if the embedding problem 
   \begin{equation}\label{nonabelian}
     1 \longrightarrow \gal(L(\sqrt{\gamma})/L) \longrightarrow \widetilde{G} \longrightarrow \gal(L/\Q) \longrightarrow 1
  \end{equation}
  has a solution with $\widetilde{G}$ non-abelian.
	
  When $\widetilde{G}\simeq H_8$, by Theorem \ref{embsol} and the discussion at \cite[p.~239]{kim}, the embedding problem \eqref{nonabelian} is solvable if and only if the quadratic forms $S_{d,e}=\frac{1}{de}X^2+dY^2+eZ^2$ and $T=X^2+Y^2+Z^2$ are equivalent over $\Q$. This implies immediately that $d,e>0$ because $S_{d,e}$ and $T$ must have the same signature. Since the rank and the discriminant obviously coincide, it only remains to check that the Hasse-Witt invariants coincide. If $p$ is a prime, the Hasse-Witt invariant of $T$ at $p$ is $1$, while the Hasse-Witt invariant of $S_{d,e}$ at $p$ is
  $$
  \begin{aligned}
  (de,d)_p(de,e)_p(d,e)_p&=(de,d)_p(de,e)_p(-d,-e)_p(d,-1)_p(-1,e)_p(-1,-1)_p=\\
  &=(de,-de)_p(-d,-e)_p(-1,-1)_p=\\
  &=(-d,-e)_p(-1,-1)_p,
  \end{aligned}
  $$
  where we used bilinearity of the Hilbert symbol and the fact that $(a,-a)_p=1$ for every $a\in \Q^{\times}$ and every prime $p$. Since $(-1,-1)$ ramifies precisely at 2 and $\infty$, we see that this instance of the embedding problem is solvable if and only if A.\ holds.

  When $\widetilde{G}\simeq D_4$, point ii) of Theorem \ref{embsol} shows that the embedding problem \eqref{nonabelian} is solvable if and only if B., C.\ or D.\ holds.

  The other claims follow immediately from Lemma \ref{inducedQcurves}.
\end{proof}

\begin{corollary}\label{corollary_twist}
  The curve $E$ has a strongly modular quadratic twist over $K$ if and only if the curve $E_L$ has a strongly modular twist that is inflated from $K$.
\end{corollary}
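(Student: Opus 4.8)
The plan is to prove both implications by translating each side into an explicit condition on Hilbert symbols over $\Q$ and then checking that the two conditions coincide. The translation on the $L$-side is supplied directly by the results just proved (Lemma~\ref{inducedQcurves} and Theorem~\ref{primQcurves}); the work lies in translating the $K$-side and in reconciling the signs that appear.

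For the forward implication, suppose $E^{\delta}$ is a strongly modular quadratic twist of $E$ over $K$, with $\delta\in K^{\times}$. By Theorem~\ref{strongmod} the curve $E^{\delta}$ is then completely defined over $K$, so $C\coloneqq E^{\delta}$ is a $\Q$-curve completely defined over the abelian field~$K$. Its base change $C_L=(E^{\delta})_L=E_L^{\delta}$ is therefore inflated from~$K$ by definition, and its attached cocycle $\xi_L(C_L)$ is the inflation of $\xi_K(C)$. Since $\gal(K/\Q)\simeq C_2$ is cyclic, every central extension of it is abelian, so by Lemma~\ref{extsym} every class in $\H^2(K/\Q,\Q^{\times})$ is symmetric; hence $\xi_K(C)$ and its inflation are symmetric. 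Thus $E_L^{\delta}$ is a strongly modular twist of $E_L$ inflated from~$K$, which is what we want.

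For the converse I would characterise each side arithmetically. On the $K$-side, because $\gal(K/\Q)$ is cyclic the symmetry condition~iii) of Theorem~\ref{strongmod} is automatic, so a twist $E^{\delta}$ with $\delta\in K^{\times}$ is strongly modular over $K$ if and only if it is completely defined over~$K$. Using the twisting formula $\lambda^{\delta}(\sigma)=\lambda(\sigma)\cdot{}^{\sigma}\delta/\delta$ together with $\lambda(\nu)=e$ and the congruence ${}^{\nu}\delta/\delta\equiv N_{K/\Q}(\delta)\pmod{(K^{\times})^2}$, the minimal field of complete definition of $E^{\delta}$ is $K(\sqrt{e\,N_{K/\Q}(\delta)})$; this equals $K$ exactly when $e\,N_{K/\Q}(\delta)$ is a rational number that is a square in~$K$, i.e.\ lies in $(\Q^{\times})^2\cup d(\Q^{\times})^2$. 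Letting $\delta$ range over $K^{\times}$, such a~$\delta$ exists if and only if $e$ or $de$ is a norm from~$K$, that is, if and only if $(d,e)=1$ or $(d,de)=1$ in $\Br(\Q)$. On the $L$-side, by Lemma~\ref{inducedQcurves} a strongly modular twist $E_L^{\gamma}$ is inflated from~$K$ (allowing descent from $\Q\subseteq K$, which is what makes both possible values $[\eta_1],[\eta_2]$ of $[\xi_L^{\pm}]$ fall under the same two cases) precisely in cases~C and~D; by Theorem~\ref{primQcurves} these cases are solvable if and only if $(d,-e)=1$ or $(d,-de)=1$.

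It then remains to match the two conditions. Using bilinearity of the Hilbert symbol and the identity $(d,d)=(d,-1)$ one computes $(d,-e)=(d,de)$ and $(d,-de)=(d,e)$, so the $L$-side condition ``$(d,-e)=1$ or $(d,-de)=1$'' is equivalent to the $K$-side condition ``$(d,e)=1$ or $(d,de)=1$'', which completes the proof. I expect the main obstacle to be precisely this arithmetic translation on the $K$-side: recognising that cyclicity of $\gal(K/\Q)$ collapses strong modularity to complete definition, computing the minimal field of complete definition of the twist as $K(\sqrt{e\,N_{K/\Q}(\delta)})$, and identifying the resulting existence-of-$\delta$ statement as a norm (hence Hilbert-symbol) condition. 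The delicate final point is the sign reconciliation, where the $D_4$-conditions $(d,-e)$ and $(d,-de)$ coming from Theorem~\ref{primQcurves} must be seen to coincide with the norm conditions $(d,de)$ and $(d,e)$; one must also be careful that ``inflated from~$K$'' in the corollary includes descent from~$\Q$, so that the $[\eta_1]$/$[\eta_2]$ dichotomy does not split the $L$-side condition.
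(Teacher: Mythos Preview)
Your proof is correct. The forward implication is essentially identical to the paper's, only spelled out a bit more (you justify explicitly why the inflated class is symmetric via cyclicity of $\gal(K/\Q)$; the paper folds this into the opening remark that over a quadratic field strong modularity is equivalent to complete definition).

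The converse takes a genuinely different route. The paper argues constructively: once Theorem~\ref{primQcurves} gives $(d,-e)=1$ or $(d,-de)=1$, it appeals to Theorem~\ref{embsol}(ii) with the roles of the generators swapped to exhibit $\gamma\in K^{\times}$ such that $L(\sqrt{\gamma})/\Q$ is the relevant $D_4$-extension, and then checks that $E^{\gamma}$ must be completely defined over~$K$. You instead translate the $K$-side independently: strong modularity over~$K$ reduces to complete definition, the minimal field of complete definition of $E^{\delta}$ is $K(\sqrt{e\,N_{K/\Q}(\delta)})$, so the existence of a good $\delta$ becomes the norm condition $(d,e)=1$ or $(d,de)=1$; you then match this to the $L$-side condition via the identities $(d,-e)=(d,de)$ and $(d,-de)=(d,e)$. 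Your argument is cleaner conceptually and avoids re-invoking the embedding problem, at the cost of not producing an explicit twisting element; the paper's version has the advantage of handing you $\gamma$ directly, which is what the worked examples need.
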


\begin{proof}
  First recall that, by Theorem \ref{strongmod}, a $\Q$-curve over a quadratic field~$K$ is strongly modular if and only if it is completely defined over~$K$.

  Let $\gamma\in K^{\times}$ be such that $E^{\gamma}$ is strongly modular over $K$. Then the base-changed curve $(E^{\gamma})_L$ is strongly modular over $L$ since its attached cocycle is the inflation of a symmetric one, and is therefore symmetric. On the other hand, $(E^{\gamma})_L$ is isomorphic to $(E_L)^{\gamma}$, and hence $E_L$ has a strongly modular twist inflated from~$K$.

  Conversely, if $E_L$ has a strongly modular twist $(E_L)^{\gamma}$ inflated from $K$, then by Theorem \ref{primQcurves} at least one of $(d,-e)$ or $(d,-de)$ is trivial. Theorem \ref{embsol} shows that we can choose $\gamma\in K^{\times}$: it is enough to use point ii) of the theorem, replacing the map~$\pi$ by the one given by $\pi(\tau)=\vartheta$ and $\pi(\sigma)=\nu$ or $\pi(\sigma)=\nu\vartheta$. Then $d$ plays the role of~$e$ in the notation of the theorem, and it is clear that $\gamma\in K^{\times}$. Therefore $(E_L)^{\gamma}$ is $L$-isomorphic to $(E^{\gamma})_L$. Now $E^{\gamma}$ is completely defined over $K$, otherwise $E^{\gamma}_L$ would not be strongly modular since its attached cocycle would not be symmetric (cf.\ the discussion below Lemma \ref{minimal}).
\end{proof}

\subsection{Examples}

  We will now give examples of $\Q$-curves with different behaviours with respect to the existence of primitive and inflated strongly modular quadratic twists.

\subsubsection*{Example 1}

  The following example is borrowed from \cite{pyl}. Let $E$ be the following elliptic curve without CM over $K=\Q(\sqrt{-3})$:
  $$E\colon y^2=x^3+2x^2+bx,$$
  where $b\in \O_K$ is any element of trace $1$. There is an isogeny $\mu_{\nu}\colon \n E\to E$ such that $\mu_{\nu}\n\mu_{\nu}=-2$. The minimal field of definition of $E$ is $L=\Q(\sqrt{-3},\sqrt{-2})$. Since $(3,-2)$ is trivial in $\Br(\Q)$, by Theorem \ref{primQcurves} there are quadratic extensions of $L$ of type $D_4$ over $\Q$ with $C_4$-subextension $L(\sqrt{\gamma})/K$. Since $\alpha=1+\sqrt{-2}\in \Q(\sqrt{e})$ has norm $3=-d$, by Theorem \ref{embsol}, the set of all these extensions is $\left\{L\biggl(\sqrt{r+r/\sqrt{-2}}\biggr)\colon r\in \Q^{\times}\right\}$. The one found in \cite{pyl} corresponds to $r=2$. Let $\gamma=2-\sqrt{-2}$. An integral model for $E_L^{\gamma}$ is
  $$E_L^{\gamma}\colon y^2=x^3+(4-2\sqrt{-2})x^2+b(2-4\sqrt{-2})x.$$

  By Theorem \ref{primQcurves} there are no quadratic extensions of $L$ that are of type $H_8$ over~$\Q$, nor quadratic extension of type $D_4$ with $C_4$-subextension $\Q(\sqrt{-2})$ or $\Q(\sqrt{6})$. Thus, all strongly modular quadratic twists of $E$ are primitive over $L$. Note also that \cite[Proposition 6.2]{pyl}, which asserts that there are no quadratic twists of $E$ that are completely defined over $K$, follows immediately from Corollary \ref{corollary_twist}.
 
  To construct other examples, consider the following family of $\Q$-curves given in~\cite{que}:
  $$E_a\colon y^2=x^3-3\sqrt{a}(4+5\sqrt{a})x+2\sqrt{a}(2+14\sqrt{a}+11a),$$
  where $a\in \Z$ is not a square. Then $E_a$ is defined over $K_a\coloneqq\Q(\sqrt{a})$, but its minimal field of complete definition is $L_a\coloneqq K_a(\sqrt{3})$.

\subsubsection*{Example 2}

  Consider the curve $(E_6)_{L_6}$. With the notation of Theorem \ref{primQcurves}, we have $d=6$ and $e=3$. Since $j(E_6)=\frac{27625536+10768896\sqrt{6}}{125}$, it follows that $E_6$ has no CM.

  The quaternion algebra $(-6,-3)=(-2,-3)$ has reduced discriminant 2, and therefore by Theorem \ref{primQcurves} it follows that $L_6$ has a quadratic extension of type~$H_8$ over~$\Q$. Following the notation of Theorem \ref{embsol}, we can pick $v_1=2$, $v_2=v_3=1$, $w_1$ and $w_2=w_3=-1$. Thus $t=1+\frac{2}{\sqrt{6}}-\frac{1}{\sqrt{3}}-\frac{1}{\sqrt{2}}$ and all extensions of type $H_8$ of $L_6$ are of the form $L_6(\sqrt{qt})$ with $q\in \Q^{\times}$. For example, letting $q=1$ and $\gamma=t$, an integral model for $(E_6)_{L_6}^{\gamma}$ is
  $$(E_6)_{L_6}^{\gamma}\colon y^2 = x^3+Ax+B,$$ 
  where
  $$
  \begin{aligned}
  A&=4080384\alpha^3-13616640\alpha^2-412416\alpha+1375488,\\
  B&=-25868537856\alpha^3+82215567360\alpha^2+2613252096\alpha-8305459200\\
  \end{aligned}
  $$
  and $\alpha=\sqrt{2}+\sqrt{3}$. This is a primitive strongly modular curve over $L_6$.

  Since $(-6,3)$ and $(6,-3)$ both have reduced discriminant $6$, there are no extensions of $L_6$ that are of type $D_4$ over $\Q$ with $C_4$-subextension $L_6/\Q(\sqrt{6})$ or $L_6/\Q(\sqrt{3})$. On the other hand, $(6,-18)$ is trivial in $\Br(\Q)$, so by Corollary \ref{corollary_twist} there exist strongly modular twists of $E_6$. To find them, note that by Theorem~\ref{embsol} it is enough to find $x,y\in \Q$ with $6y^2-x^2=18$. As $x=6$, $y=3$ solve this equation, letting $t=18+6\sqrt{6}$ we get that all extensions of $L_6$ of type $D_4$ over $\Q$ are of the form $L_6(\sqrt{qt})$ with $q\in \Q^{\times}$. Let us choose for example $q=1/6$ and $\gamma'=qt$. Then an integral model for $E_6^{(\gamma')}$ is
  $$E_6^{(\gamma')}\colon y^2 = x^3 - (28512+11520 \sqrt{6}) x + 2594304+ 1059840 \sqrt{6}.$$

\subsubsection*{Example 3}

  The curve $E_7$ is not strongly modular, but since $(7,-3)$ is trivial, by Corollary \ref{corollary_twist} it has strongly modular quadratic twists. For example setting $\gamma=7+2\sqrt{7}$, the curve $E^{\gamma}$ is strongly modular over $K_7$. An integral model is
  $$E_7^{\gamma}\colon y^2 = x^{3} - (166992+61824 \sqrt{7}) x + 36452864+ 13804672 \sqrt{7}.$$
  Since $(-7,-3)$ has reduced discriminant $3$ and $(-7,3)$ has reduced discriminant~$21$, Theorem \ref{primQcurves} shows that $(E_7)_{L_7}$ has no primitive strongly modular twists.

\subsubsection*{Example 4}

  Finally, the curve $(E_5)_{L_5}$ has no strongly modular twists at all, since $(3,-5)$ has reduced discriminant~$10$, $(-3,-5)$ has reduced discriminant $5$ and both $(5,-3)$ and $(5,-15)$ have reduced discriminant~$15$.

\bibliographystyle{plain}
\bibliography{biblio}

\end{document}